\newtheorem{proposition}{Proposition}
\newtheorem{theorem}[proposition]{Theorem}
\newtheorem{lemma}[proposition]{Lemma}
\newtheorem{corollary}[proposition]{Corollary}
\theoremstyle{definition}
\newtheorem{remark}[proposition]{Remark}
\newcommand{\cref}[1]{Corollary~\ref{c.#1}}
\numberwithin{equation}{section}
\numberwithin{proposition}{section}
\renewcommand{\leq}{\leqslant}
\renewcommand{\geq}{\geqslant}
\newcommand{\Z}{\mathbb{Z}}
\newcommand{\R}{\mathbb{R}}
\newcommand{\E}{\mathbb{E}}
\renewcommand{\P}{\mathbb{P}}
\newcommand{\ep}{\varepsilon}
\renewcommand{\subset}{\subseteq}
\newcommand{\Id}{\mathsf{Id}}
\renewcommand{\subset}{\subseteq}
\DeclareMathOperator{\dist}{dist}
\DeclareMathOperator{\tr}{Tr}
\renewcommand{\tilde}{\widetilde}
\newcommand{\de}{\delta}
\newcommand{\dk}{\delta_k}
\renewcommand{\hat}{\widehat}
\newcommand{\C}{\mathbb{C}}
\newcommand{\A}{\mathscr{A}}
\renewcommand{\S}{\mathscr{S}}
\newcommand{\im}{\text{\rm Im}\hspace{0.1cm}}
\newcommand{\re}{\text{\rm Re}\hspace{0.1cm}}
\renewcommand{\dk}{\mathsf{d_K}}
\renewcommand{\i}{\text{\rm i}}
\renewcommand{\u}{\mathfrak{u}}
\renewcommand{\d}{\mathrm{d}}
\renewcommand{\v}{\mathfrak{v}}
\newcommand{\fc}{\mathsf{fc}}
\newcommand{\MP}{\mathsf{MP}}
\newcommand{\TW}{\mathsf{TW}}
\newcommand{\WS}{\mathsf{Wishart}}
\newcommand{\Xv}{X^{\mathrm{v}}}
\newcommand{\Xw}{X^{\mathrm{w}}}
\newcommand{\Hv}{H^{\mathrm{v}}}
\newcommand{\Hw}{H^{\mathrm{w}}}
\newcommand{\Ev}{\E^{\mathrm{v}}}
\newcommand{\Ew}{\E^{\mathrm{w}}}
\newcommand{\Av}{A^{\mathrm{v}}}
\newcommand{\Rv}{\hat{R}_{\mathrm{v}}}
\newcommand{\Rw}{\hat{R}_{\mathrm{w}}}
\newcommand{\Omv}{\Omega_{\mathrm{v}}}
\newcommand{\Omw}{\Omega_{\mathrm{w}}}
\newcommand{\la}{\langle}
\newcommand{\ra}{\rangle}
\begin{document}

\title[Quantitative Edge Universality for Sample Covariance Matrices]{Quantitative Universality for the Largest Eigenvalue of Sample Covariance Matrices}

\begin{abstract}
We prove the first explicit rate of convergence to the Tracy-Widom distribution for the fluctuation of the largest eigenvalue of sample covariance matrices that are not integrable. Our primary focus is matrices of type $ X^*X $ and the proof follows the Erd\"{o}s-Schlein-Yau dynamical method. We use a recent approach to the analysis of the Dyson Brownian motion from \cite{bourgade2018extreme} to obtain a quantitative error estimate for the local relaxation flow at the edge. Together with a quantitative version of the Green function comparison theorem, this gives the rate of convergence.

Combined with a result of Lee-Schnelli \cite{lee2016tracy}, some quantitative estimates also hold for more general separable sample covariance matrices $ X^* \Sigma X $ with general diagonal population $ \Sigma $.
\end{abstract}

\author[H. Wang]{Haoyu Wang}
\address[H. Wang]{Courant Institute of Mathematical Sciences, New York University, 251 Mercer St., New York, NY 10012}
\email{hw1973@nyu.edu}

\maketitle
\setcounter{tocdepth}{1}

\section{Introduction}
\subsection{Overview and main results}
Edge universality of sample covariance matrices has been a classical problem in random matrix theory. It is well known that the distribution of the largest eigenvalue (after appropriate rescaling) converges to the Tracy-Widom distribution. Early non-quantitative results were first proved in \cite{peche2009universality,pillai2014universality,Soshnikov}. Quantitative estimates, however, were only obtained for the Wishart ensemble (see \cite{el2006rate,Johansson,Johnstone,ma2012accuracy}), which is essentially an integrable model. In this paper, we prove the explicit rate of convergence $ N^{-2/9} $ to the Tracy-Widom distribution for all sample covariance matrices of type $ X^*X $ with general distributed entries, and an analogous result with deterministic rate $ N^{-1/57} $ for all separable sample covariance matrices $ X^* \Sigma X $ with diagonal population $ \Sigma $. For simplicity and motivations from statistics, we only consider the real case, but the whole proof and the results are also true for complex sample covariance matrices.

Let $ X=(x_{ij}) $ be an $ M \times N $ data matrix with independent real valued entries with mean 0 and variance $ M^{-1} $,
\begin{equation}\label{e.Assumption1}
x_{ij} = M^{-1/2}q_{ij},\ \ \ \E q_{ij}=0,\ \ \ \E q_{ij}^2 =1.
\end{equation}
Furthermore, we assume the entries $ q_{ij} $ have a sub-exponential decay, that is, there exists a constant $ \theta>0 $ such that for $ u>1 $,
\begin{equation}\label{e.Assumption2}
\P (|q_{ij}| > u) \leq \theta^{-1} \exp (-u^\theta).
\end{equation}
This sub-exponential decay assumption is mainly for convenience, other conditions such as the finiteness of a sufficiently high moment would be enough. (For a necessary and sufficient condition for the edge universality we refer to \cite{ding2018necessary}.)

The sample covariance matrix corresponding to data matrix $ X $ is defined by $ H := X^* X $. Throughout this paper, to avoid trivial eigenvalues, we will be working in the regime
$$ \xi=\xi(N) := N/M,\ \ \ \lim_{N \to \infty} \xi \in (0,1) \  \mbox{or}\ \xi \equiv 1. $$
We will mainly work with the rectangular case $ 0< \xi <1 $, but will also show how to adapt the arguments to the square case $ M \equiv N $. (The reason why we do not discuss the general case $ \lim \xi =1 $ is merely technical due to the lack of local laws at the hard edge. In particular, the rigidity estimate at the hard edge is only known for a fixed $ \xi \equiv 1 $ but not for $ \xi = \xi(N) \to 1 $.)

We order the eigenvalues of $ H $ as $ \lambda_1 \leq \cdots \leq \lambda_N $, and use $ \lambda_+ $ to denote the typical location of the largest eigenvalue (see \eqref{e.EndPoints} for the definition). For the main result of this paper, we consider the Kolmogorov distance
$$ \dk(X,Y) := \sup_{x} \left| \P(X \leq x) - \P(Y \leq x) \right|. $$

\begin{theorem}\label{t.Rate}
Let $ H_N $ be sample covariance matrices satisfying \eqref{e.Assumption1} and \eqref{e.Assumption2}. Let $ \TW $ be the Tracy-Widom distribution. For any $ \ep>0 $, for large enough $ N $ we have
$$ \dk(N^{2/3}(\lambda_N - \lambda_+),\TW) \leq N^{-\frac{2}{9}+\ep}. $$
\end{theorem}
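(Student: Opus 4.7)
The plan is to prove \tref{Rate} via the Erd\H{o}s--Schlein--Yau dynamical three-step strategy in its quantitative form. Introduce the Ornstein--Uhlenbeck interpolation
\begin{equation*}
X_t = e^{-t/2} X + (1-e^{-t})^{1/2} G,
\end{equation*}
where $G$ is an independent $M\times N$ matrix of i.i.d.\ $\mathcal{N}(0,M^{-1})$ entries, and set $H_t = X_t^* X_t$, so that $H_0 = H$ and $H_t$ is a Gaussian-divisible sample covariance matrix. The overall scheme is: \textbf{(i)} prove quantitative edge universality for $H_t$ at a short time $t$; \textbf{(ii)} prove a quantitative Green function comparison theorem (GFT) that transfers this bound from $H_t$ back to $H_0$; \textbf{(iii)} optimize $t = t(N)$ to balance the two errors. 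The key inputs throughout are the sharp edge local law and rigidity estimates for sample covariance matrices on scales above $N^{-2/3}$ near the soft edge $\lambda_+$.

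For step (i), I would adapt the recent method of \cite{bourgade2018extreme} for Wigner DBM to the Laguerre/singular-value DBM governing $H_t$. The core is a quantitative edge homogenization: couple the DBM trajectory initialized at $X$ with an auxiliary trajectory initialized at a pure Wishart matrix, and control the difference of the top particles via a discrete parabolic equation whose coefficients are estimated by the local law. Since the Wishart edge statistics are exactly Tracy--Widom with a known explicit rate (e.g.\ from \cite{ma2012accuracy}), this yields a bound of the form $\dk \le N^{\ep}(Nt)^{-\alpha}$ for some explicit $\alpha>0$ and for $t$ in a window $N^{-1/3+\delta}\le t \le 1$.

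For step (ii), I would develop a quantitative GFT at the edge. The event $\{N^{2/3}(\lambda_N-\lambda_+)\le x\}$ is encoded by a smooth functional of $\mathrm{Im}\,\mathrm{Tr}(H - E - i\eta)^{-1}$ at spectral scale $\eta$ slightly above $N^{-2/3}$, and a Lindeberg telescope swaps the entries of $X_0$ for those of $X_t$ one at a time. Each swap is controlled by a resolvent expansion whose error is explicit in the local-law precision, and the total cost scales as a small power of $Nt$ (the first three moments already agree, with the fourth-moment mismatch of order $t$ producing the dominant contribution).

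The main obstacle is the quantitative edge DBM analysis. Unlike in the bulk, the discrete parabolic equation governing the homogenization difference at the soft edge has a weaker effective spectral gap because of the square-root vanishing of the density, so the short-range energy estimates and eigenvector delocalization inputs must be used with edge-adapted weights, and every constant must be made explicit in $N$ and $t$. Once steps (i) and (ii) are available as explicit power laws $E_{\mathrm{DBM}}(t)$ and $E_{\mathrm{GFT}}(t)$ in $Nt$, the choice of $t$ that balances them yields the announced rate $N^{-2/9+\ep}$.
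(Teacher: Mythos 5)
Your proposal follows the same overall three-step Erd\H{o}s--Schlein--Yau scheme as the paper: rigidity and local law as a priori input, quantitative edge relaxation of the singular-value DBM by coupling with a Wishart trajectory in the spirit of \cite{bourgade2018extreme}, a quantitative Green function comparison by Lindeberg telescope, and a final optimization of the time parameter $t$ against the smoothing scale $\rho$. The references you point to (\cite{bourgade2018extreme} for the DBM step, \cite{ma2012accuracy} for the Wishart rate) are exactly the ones the paper relies on, and your description of the GFT step and the parameter balancing matches the paper's Section \ref{s.Comparison}.

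Where your description drifts from the paper is in the mechanics of the DBM edge estimate. You write that the difference of the top particles is controlled via short-range energy estimates and eigenvector-delocalization inputs with edge-adapted weights, i.e.\ a homogenization-kernel style argument as in \cite{bourgade2016fixed,landon2019fixed}. The method of \cite{bourgade2018extreme} that the paper actually uses is different and avoids those inputs entirely: one sets $\u_k = e^{t/(2\xi)}\tfrac{\d}{\d\nu}x_k^{(\nu)}$, observes that it (and its dominating version $\v_k$) solves a nonlocal discrete parabolic equation with a maximum principle, and then aggregates into the holomorphic observable $f_t(z)=e^{-t/(2\xi)}\sum_k \v_k(t)/(x_k(t)-z)$. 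The singularities in the parabolic equation cancel against the It\^{o} drift of $x_k$, so $f_t$ satisfies a stochastic advection equation whose characteristics are explicit (up to comparison with the semicircle characteristics near the soft edge), and the entire relaxation bound comes from a Gr\"{o}nwall-type estimate along these characteristics plus the BDG inequality for the martingale part. This observable/characteristics route is what yields the clean $N^\ep/(Nt)$ bound on $|s_k(t)-r_k(t)|$ (Theorem \ref{t.Relaxation}) without any short-range cutoff or delocalization, and it is precisely what makes the edge rate quantitative. Your plan is correct at the level of architecture, but if you pursued the homogenization route you sketch, you would have to rebuild edge-adapted energy estimates from scratch, whereas the observable method sidesteps that difficulty.
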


The null case $ X^*X $ is our primary concern in this paper, but quantitative estimates are also valid for general diagonal population matrices $ X^* \Sigma X $ thanks to the comparison theorem for the Green function flow by Lee and Schnelli (see Section \ref{s.GeneralPopulation} for more details). Combining our quantitative edge universality for the null case (Theorem \ref{t.Rate}) with the Green function comparison by Lee-Schnelli (Proposition \ref{p.GreenFunctionFlow}), we derive the rate of convergence to Tracy-Widom distribution for separable sample covariance matrices with general diagonal population.

\begin{corollary}\label{t.RateGeneralPopulation}
Let $ Q := X^* \Sigma X $ be an $ N \times N $ separable sample covariance matrix, where $ X $ is an $ M \times N $ real random matrix satisfying \eqref{e.Assumption1} and \eqref{e.Assumption2}, and $ \Sigma $ is a real diagonal $ M \times M $ matrix satisfying \eqref{e.AssumptionPopulation}. Let $ \mu_N $ be the largest eigenvalue of $ Q $. For any $ \ep>0 $, for large enough $ N $ we have
\begin{equation}\label{e.RateGeneralPopulation}
\dk \left( \gamma_0 N^{2/3} (\mu_N - E_+),\TW \right) \leq N^{-\frac{1}{57} + \ep},
\end{equation}
where $ E_+ $ defined in \eqref{e.RightEndpoint} denotes the rightmost endpoint of the spectrum and $ \gamma_0 $ is a normalization constant defined in \eqref{e.ScalingConstant}.
\end{corollary}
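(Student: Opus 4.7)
The plan is to transport the null-case rate of \tref{Rate} to the general separable setting via the Green function comparison of \pref{GreenFunctionFlow}. Writing $ H_0 := X^*X $ and $ H_\Sigma := X^* \Sigma X $, the three-step scheme is: first, invoke \tref{Rate} to bound $ \dk(N^{2/3}(\lambda_N(H_0) - \lambda_+), \TW) \le N^{-2/9+\ep} $; second, apply \pref{GreenFunctionFlow} to bound the Kolmogorov distance between the rescaled largest eigenvalue $ \gamma_0 N^{2/3}(\mu_N - E_+) $ of $ H_\Sigma $ and the rescaled largest eigenvalue of an appropriate Gaussian reference ensemble whose edge distribution is already controlled in the first step; and third, combine by the triangle inequality for $ \dk $.

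The technical core lies in the second step, which requires converting the resolvent-level comparison in \pref{GreenFunctionFlow} into a bound on cumulative distribution functions at the edge. The Lee-Schnelli estimate is typically formulated for smooth functionals of the traced Green function $ \operatorname{Im} \tr G_\Sigma(E + \i\eta) $ at spectral scale $ \eta $ slightly above $ N^{-2/3} $. To pass from this to the counting function near $ E_+ $, I would use the standard Erd\"{o}s-Schlein-Yau approximation scheme: approximate $ \indc_{\mu_N \le E} $ by a smooth functional $ \Psi_{\eta,\ell}(H_\Sigma) $ of the $ \eta $-regularized counting function obtained by convolving the spectral measure with a Poisson kernel and composing with a smooth indicator of width $ \ell $ at the threshold. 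Writing
$$ \P\bigl(\gamma_0 N^{2/3}(\mu_N - E_+) \le x\bigr) = \E\bigl[\Psi_{\eta,\ell}(H_\Sigma)\bigr] + O(\mathrm{smoothing}), $$
applying the same representation to the Gaussian reference ensemble, and invoking \pref{GreenFunctionFlow} on the difference $ \E[\Psi_{\eta,\ell}(H_\Sigma) - \Psi_{\eta,\ell}(H_0^{\mathrm{Gauss}})] $ yields the required comparison. The smoothing remainder is in turn controlled by the edge rigidity available for both ensembles. Optimizing $ \eta $ and $ \ell $ against the null-case exponent $ 2/9 $ and the polynomial $ \eta $-dependence in \pref{GreenFunctionFlow} produces the stated rate $ N^{-1/57+\ep} $.

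The main obstacle is executing this optimization tightly: the exponent $ 1/57 $ is sensitive both to the $ \eta $-scaling inside \pref{GreenFunctionFlow} and to the width $ \ell $ used to smooth the edge indicator, and every $ N^\ep $ slack must be tracked so that no additional loss is introduced. A secondary subtlety is that the rescaling $ (\gamma_0, E_+) $ appropriate for $ H_\Sigma $ differs from $ (1, \lambda_+) $ for $ H_0 $, so the comparison must either be carried out in the unrescaled spectrum or through a matching-endpoint argument that uses rigidity at the soft edge for both ensembles to preserve the rate. Precisely because \tref{Rate} delivers a quantitative handle on the Gaussian null case, the combination with \pref{GreenFunctionFlow} upgrades the previously qualitative Lee-Schnelli universality into the explicit rate stated in \eqref{e.RateGeneralPopulation}.
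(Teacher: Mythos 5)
Your three-step plan (null-case rate, Green function flow comparison, triangle inequality) is the same high-level strategy as the paper, but there are two issues worth flagging.

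First, a conceptual slip: the reference ensemble in \pref{GreenFunctionFlow} is \emph{not} Gaussian. The Lee--Schnelli flow \eqref{e.GreenFunctionFlow} interpolates only in the population $\Sigma(t)$, keeping the data matrix $X$ fixed, so the comparison is between $\tilde Q=\gamma_0 X^*\Sigma X$ and $W=\sqrt{\xi}(1+\sqrt{\xi})^{-4/3}X^*X$ where $X$ still has \emph{general} (sub-exponential, not Gaussian) entries. Your expression $\E[\Psi_{\eta,\ell}(H_\Sigma)-\Psi_{\eta,\ell}(H_0^{\mathrm{Gauss}})]$ is therefore not what \pref{GreenFunctionFlow} controls; if you really wanted a Gaussian reference you would additionally need a Lindeberg or DBM argument over the entries of $X$, which the Lee--Schnelli result does not provide. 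The correct chain is $\tilde Q \leftrightarrow W \leftrightarrow \TW$, and it is precisely because $W$ has general entries that the quantitative null-case result \tref{Rate} is needed in the first step rather than a classical Wishart computation.

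Second, a structural difference in the optimization. You keep the null-case bound frozen at $N^{-2/9+\ep}$ and then separately optimize the smoothing scale in the Green function comparison. The paper instead re-opens the un-optimized null-case estimate \eqref{e.RateNullCase}, which still carries the free parameters $\rho$ and $t$, identifies the Lee--Schnelli smoothing width $\ell$ with the same $\rho$, and jointly optimizes: $\rho=N^{-13/19}$, $t=N^{-6/19}$. Note this is very different from the null-optimal $\rho=N^{-8/9}$, which in the joint problem would make the $N^{-1/3+18\ep}$ term from \pref{GreenFunctionFlow} diverge. Your decoupled version does happen to yield the same exponent $1/57$ here, because the bottleneck is the Green function comparison rather than the null rate ($1/57<2/9$), and because the shift error $|\P(W\text{-edge}\le s+\mathrm{shift})-\P(W\text{-edge}\le s)|$ can be controlled by $2\,\dk(W\text{-edge},\TW)+O(\mathrm{shift})\lesssim N^{-2/9}+N^{-\ep'}$ using \tref{Rate} and the bounded density of $\TW$. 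You should state that absorption step explicitly; without it the decoupled argument does not produce a Kolmogorov-distance bound. More importantly, be aware that the coincidence of rates is specific to this numerology: the paper's joint optimization is the safer and more transparent derivation, and is the one that would survive an improvement to \pref{GreenFunctionFlow}.
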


The method of the paper follows the three-step strategy of the Erd\"{o}s-Schlein-Yau dynamical approach \cite{erdHos2011universality}: (i) a priori bounds on locations of eigenvalues; (ii) local relaxation of the eigenvalue dynamics; (iii) a density argument showing eigenvalues statistics have not changed after short time.

Specifically, in this paper, the three-step strategy is employed in the following way: (i) is the rigidity for singular values, which can be rephrased from classical results on eigenvalues (see \cite{bloemendal2014isotropic,pillai2014universality}). For the particular square case $ M \equiv N $, we use a different rigidity estimate at the hard edge from \cite{alt2017local},which results in a slightly different proof; (ii) is the recent approach to the analysis of Dyson Brownian motion from \cite{bourgade2018extreme}, which introduced an observable defined via interpolation with integrable models (see \cite{bourgade2016fixed,landon2019fixed}). It describes the singular values evolution through a stochastic advection equation; (iii) is a quantitative version of the Green function comparison theorem, which is a slight extension of the classical result from \cite{GreenFunctionComparison}.

\begin{remark}
As discussed in \cite[Section 3]{pillai2014universality}, though we discuss the problems in the context of covariance matrices, the proof should also work for more generalized problems such as the quantitative edge universality of correlation matrices (see \cite{pillai2012edge}).
\end{remark}

This paper is organized as follows. The main part of the paper (Section \ref{s.prelim}-\ref{s.Comparison}) is devoted to the null case $ X^*X $. In Section \ref{s.prelim} we rephrase the classical results on the eigenvalues of sample covariance matrices to the version for singular values, including Dyson Brownian motion, local laws and rigidity estimates. In Section \ref{s.LocalRelaxationFlow}, we define an observable that describes the evolution of singular values, and then prove the error estimate for the local relaxation flow at the edge by studying the dynamics of the observable. In Section \ref{s.Comparison} we prove the quantitative Green function comparison theorem and use it to derive the rate of convergence to the Tracy-Widom distribution. Finally, in Section \ref{s.GeneralPopulation} we generalize our result to separable sample covariance matrices with diagonal population by studying the interpolation between general covariance matrices with the null case.

\subsection{Notations}
Throughout this paper, we use the notation $ A \lesssim B $ if there exists a constant $ C $ which is independent of $ N $ such that $ A \leq CB $ holds. We also denote $ A \sim B $ if both $ A \lesssim B $ and $ B \lesssim A $ hold. If $ A $ and $ B $ are complex valued, $ A \sim B $ means $ \re A \sim \re B $ and $ \im A \sim \im B $. We also denote $ C $ a generic constant which does not depend on $ N $ but may vary form line to line. We use $ \llbracket A,B \rrbracket := [A,B] \cap \Z $ to denote the set of integers between $ A $ and $ B $.

We also denote
$$ \varphi = e^{C_0 (\log \log N)^2} $$
a subpolynomial error parameter, for some fixed $ C_0>0 $. This constant $ C_0 $ is chosen large enough so that the eigenvalues (and singular values) rigidity and the strong local Marchenko-Pastur law hold (see section \ref{s.LocalLaw}).

\subsection*{Acknowledgement}
The author would like to thank Prof. Paul Bourgade for suggesting this problem, helpful discussions, and useful comments on the early draft of the paper.

\section{Preliminaries}\label{s.prelim}
\subsection{Dyson Brownian motion for covariance matrices}
Let $ B $ be an $ M \times N $ real matrix Brownian motion: $ B_{ij} $ are independent standard Brownian motions. We define the $ M \times N $ matrix $ M_t $ by
$$ M_t = M_0 + \dfrac{1}{\sqrt{N}}B_t. $$
The eigenvalues dynamics for the real Wishart process $ X_t := M_t^* M_t $ was first proved in \cite{bru1989diffusions}. Under our normalization convention, the equation is in the following form given in \cite[Appendix C]{bourgade2017eigenvector}
\begin{equation}\label{e.EigenvalueDBM}
\d \lambda_k = 2\sqrt{\lambda_k}\dfrac{\d B_{kk}}{\sqrt{N}} + \left( \dfrac{M}{N} + \dfrac{1}{N} \sum_{l \neq k}\dfrac{\lambda_k + \lambda_l}{\lambda_k - \lambda_l} \right)\d t.
\end{equation}
Due to technical issues, it is difficult to use the coupling method from \cite{bourgade2016fixed} to analyze \eqref{e.EigenvalueDBM} in a direct way. This motivates us to consider the singular values instead.

Let $ s_k := \sqrt{\lambda_k} $ denote the singular values of $ X $. The Dyson Brownian motion for singular values dynamics of such sample covariance matrices is the following Ornstein-Uhlenbeck process \cite[equation (5.8)]{erdHos2012local}.
$$ \d s_k = \dfrac{\d B_k}{\sqrt{N}} + \left[ -\dfrac{1}{2\xi}s_k + \dfrac{1}{2}\left( \dfrac{1}{\xi} -1 \right) \dfrac{1}{s_k} + \dfrac{1}{2N} \sum_{l \neq k} \left( \dfrac{1}{s_k - s_l} + \dfrac{1}{s_k + s_l} \right) \right]\d t, \ \ \ 1 \leq k \leq N. $$
An important idea in this paper is the following symmetrization trick (see \cite[equation (3.9)]{che2019universality}):
$$ s_{-i}(t)=-s_i(t),\ \ \ B_{-i}(t)=-B_i(t),\ \ \forall t \geq 0,\ 1 \leq i \leq N. $$
From now we label the indices from $ -1 $ to $ -N $ and $ 1 $ to $ N $, so that the zero index is omitted. Unless otherwise stated, this will be the convention and we will not emphasize it explicitly. After symmetrization, the dynamics turns to the following form
\begin{equation}\label{e.SymmetrizedDynamics}
\d s_k = \dfrac{\d B_k}{\sqrt{N}} + \left[ -\dfrac{1}{2\xi}s_k + \dfrac{1}{2}\left( \dfrac{1}{\xi} -1 \right) \dfrac{1}{s_k} + \dfrac{1}{2N} \sum_{l \neq \pm k} \dfrac{1}{s_k - s_l} \right]\d t, \ \ \ -N \leq k \leq N, k \neq 0.
\end{equation}

\subsection{Local law and rigidity for singular values}\label{s.LocalLaw}
The local law and rigidity estimates are classical results for the eigenvalues of sample covariance matrices. In this section, for later use, we rephrase these results into the corresponding version in terms of singular values.

It is well known that the empirical measure of the eigenvalues converges to the Marchenko-Pastur distribution
$$ \rho_{\MP}(x) = \dfrac{1}{2\pi \xi}\sqrt{\dfrac{[(x-\lambda_-)(\lambda_+ -x)]_+}{x^2}}, $$
where
\begin{equation}\label{e.EndPoints}
\lambda_{\pm} = (1 \pm \sqrt{\xi})^2.
\end{equation}
Define the typical locations of the singular values:
$$ \gamma_k :=  \inf \left\{ E>0 : \int_{-\infty}^{E^2} \rho_{\MP}(x)\d x \geq \dfrac{k}{N} \right\},\ \ \ 1 \leq k \leq N. $$
Following the symmetrization trick, we also define $ \gamma_{-k} = -\gamma_k $. By a change of variable, it is easy to check that
\begin{equation}\label{e.typical}
\int_{-\infty}^{\gamma_k} \rho(x)\d x=\dfrac{N+k}{2N}, \ \ \ \int_{-\infty}^{\gamma_{-k}} \rho(x)\d x = \dfrac{N-k}{2N},
\end{equation}
where $ \rho(x) $ is the counterpart of Marchenko-Pastur law for singular values, defined by
\begin{equation}\label{e.measure}
\rho(x)=\dfrac{1}{2\pi \xi}\sqrt{\dfrac{[(x^2-\lambda_-)(\lambda_+ - x^2)]_+}{x^2}},\ \ \ \sqrt{\lambda_-} \leq |x| \leq \sqrt{\lambda_+}.
\end{equation}

Denote $ s_1 \leq \cdots \leq s_N $ the singular values of the data matrix $ X $, and extend the singular values following the symmetrization trick by $ s_{-k}=-s_k $. For $ z=E+\i\eta \in \mathbb{C} $ with $ \eta>0 $, let $ m_N(z) $ and $ S_N(z) $ denote the Stieltjes transform of the empirical measure of the (symmetrized) singular values and eigenvalues, respectively:
$$ m_N(z):= \dfrac{1}{2N}\sum_{-N \leq k \leq N} \dfrac{1}{s_k -z},\ \ \ S_N(z) := \dfrac{1}{N}\sum_{k=1}^N \dfrac{1}{\lambda_k -z}. $$
As mentioned previously, in the summation from $ -N $ to $ N $ the $ 0 $ index is always excluded. Note that due to the symmetrization, this is equivalent to
\begin{equation}\label{e.DiscreteStieltjes}
 m_N(z) = \dfrac{1}{2N}\sum_{k=1}^N \left( \dfrac{1}{s_k -z} + \dfrac{1}{-s_k - z} \right) = \dfrac{1}{N} \sum_{k=1}^N \dfrac{z}{s_k^2 - z^2} = z S_N(z^2).
\end{equation}
On the other hand, use $ m_{\MP}(z) $ to denote the Stieltjes transform of the Marchenko-Pastur law
$$ m_{\MP}(z) := \int_{\R} \dfrac{\rho_{\MP}(x)}{x-z}\d x=\dfrac{1-\xi-z+\sqrt{(z-\lambda_-)(z-\lambda_+)}}{2\xi z}, $$
where $ \sqrt{\quad} $ denotes the square root on the complex plane whose branch cut is the negative real line. With this choice we always have $ \im m_{\MP}(z)>0 $ when $ \im z>0 $. For the singular values, recall the limit distribution $ \rho(x) $ for the empirical measure and use $ m(z) $ to denote its corresponding Stieltjes transform
$$ m(z) := \int_{\R} \dfrac{\rho(x)}{x-z}\d x = \int_{\R} \dfrac{1}{x-z}\dfrac{1}{2\pi \xi}\sqrt{\dfrac{[(x^2-\lambda_-)(\lambda_+ - x^2)]_+}{x^2}}\d x. $$
We have the following relation between $ m(z) $ and $ m_{\MP}(z) $
\begin{multline}\label{e.ContinuousStieltjes}
m(z) = \int_{\sqrt{\lambda_-}}^{\sqrt{\lambda_+}} \left( \dfrac{1}{x-z} - \dfrac{1}{x+z} \right) \dfrac{1}{2\pi \xi}\sqrt{\dfrac{[(x^2-\lambda_-)(\lambda_+ - x^2)]_+}{x^2}}\d x\\
= \int_{\R}\dfrac{z}{x-z^2}\rho_{\MP}(x)\d x = zm_{\MP}(z^2).
\end{multline}
It is well known that we have the strong local Marchenko-Pastur law \cite{bloemendal2014isotropic,pillai2014universality} for the estimate of $ S_N(z) $, i.e. for any $ D>0 $, there exists $ N_0(D)>0 $ such that for every $ N \geq N_0 $ we have
$$ \P \left( \left| S_N(z) - m_{\MP}(z) \right| \leq\dfrac{\varphi}{N\eta} \right) > 1-N^{-D}. $$
By the relations \eqref{e.DiscreteStieltjes} and \eqref{e.ContinuousStieltjes}, we know that
$$ |m_N(z) - m(z)| = \left| z \left( S_N(z^2) - m_{\MP}(z^2) \right) \right| \leq |z| \left| S_N(z^2) - m_{\MP}(z^2) \right|. $$
Combining with the strong local Marchenko-Pastur law, this gives us
\begin{equation}\label{e.LocalLaw}
\P \left( \left| m_N(z) - m(z) \right| \lesssim \dfrac{\varphi}{N\eta} \right) > 1-N^{-D}.
\end{equation}

\begin{figure}[ht]
\includegraphics[height=4cm]{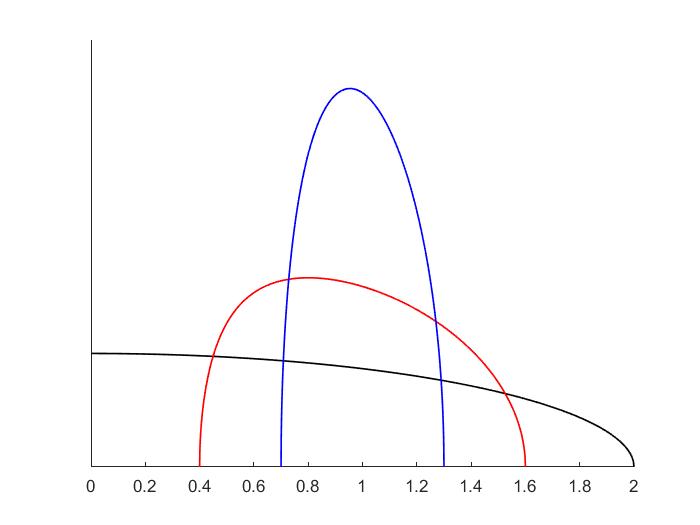}
\caption{Graphs of the density $ \rho(x) $ for $ x\geq0 $ (i.e. the density for the actual singular values) with $ \xi=1,(3/5)^2,(3/10)^2 $ respectively. The curve for $ \xi=1 $ does not have a "square root"-type shape at the edge $ \sqrt{\lambda_-} =0 $ in this case due to the singularity of the Marchenko-Pastur distribution.}
\end{figure}

For the rigidity estimates, a key observation is that the critical case $ \xi=1 $ is significantly different from other cases. This is because the Marchenko-Pastur law $ \rho_{\MP} $ has a singularity at the point $ x=0 $ in this situation. When $ \xi < 1 $, the rigidity of singular values can be easily obtained from the analogous estimates for eigenvalues (see \cite{pillai2014universality}). Let $ \hat{k}:=\min(k,N+1-k) $, for any $ D>0 $ there exists $ N_0(D) $ such that the following holds for any $ N \geq N_0 $,
\begin{equation}\label{e.RigidityNotOne}
\P \left(|s_k - \gamma_k| \leq \varphi^{\frac{1}{2}} N^{-\frac{2}{3}}(\hat{k})^{-\frac{1}{3}} \  \mbox{for all}\ k \in \llbracket 1,N \rrbracket \right) > 1-N^{-D}.
\end{equation}

For the critical case $ \xi=1 $, now the Marchenko-Pastur distribution is supported on $ [0,4] $ and is given by $ \rho_{\MP}(x)= \frac{1}{2\pi}\sqrt{(4-x)/x} $. A key observation is that the scales of eigenvalue spacings are different at the two edges. Due to this phenomenon, we use the following two different results, depending on the location in the spectrum.

On the one hand, the Marchenko-Pastur distribution still behaves like a square root near the soft edge $ x=4 $, which implies that the result is the same as the rectangular case. The rigidity estimate near the soft edge can be easily adapted from the result for eigenvalues in \cite[Theorem 2.10]{bloemendal2014isotropic}, i.e. for some (small) $ \omega>0 $ and any $ \ep>0 $ we have
\begin{equation}\label{e.RigidityOneSoft}
\P \left( |s_k - \gamma_k| \leq N^\ep (N-k+1)^{-\frac{1}{3}}N^{-\frac{2}{3}} \ \mbox{for all} \ k \in \llbracket (1-\omega)N,N \rrbracket \right) > 1-N^{-D}.
\end{equation}

On the other hand, as explained in \cite{cacciapuoti2013local}, at the hard edge $ x=0 $ the typical distance between eigenvalues and the edge is of order $ N^{-2} $, which is much smaller than the typical distance between neighbouring eigenvalues in the bulk (or at the soft edge). Note that in this situation, the measure for the symmetrized singular values coincides with the standard semicircle law, that is $ \rho(x) = \frac{1}{2\pi}\sqrt{(4-x^2)_+} $. By the relation \eqref{e.typical}, this means that the typical $ k $-th singular value $ \gamma_k $ of an $ N \times N $ data matrix shares the same position with the typical $ (N+k) $-th eigenvalue of a $ 2N \times 2N $ generalized Wigner matrix. The link between these two models can be illustrated by the symmetrization trick: Define the $ 2N \times 2N $ matrix
\begin{equation*}
\tilde{H} = \left(
\begin{matrix}
0 & X^*\\
X & 0
\end{matrix}
\right),
\end{equation*}
then we know that the eigenvalues of $ \tilde{H} $ are precisely the symmetrized singular values of $ X^* $. Note that we have $ \tilde{H}=\tilde{H}^* $, $ \E \tilde{H}_{ij}=0 $ and $ \sum_{i=1}^{2N} \E \tilde{H}_{ij}^2 = 1 $ for every $ j \in \llbracket 1,2N \rrbracket $. This shows that $ \tilde{H} $ is indeed a Wigner-type matrix except the lack of nondegeneracy condition caused by the zero blocks. By considering the matrix of this type, the rigidity at the hard edge can be proved directly from \cite[Theorem 2.7]{alt2017local}
\begin{equation}\label{e.RigidityOneHard}
\P \left( |s_k - \gamma_k| \leq N^{-1+\ep} \ \mbox{for all}\ k \in \llbracket 1,(1-\omega)N \rrbracket \right) > 1-N^{-D}.
\end{equation}

\section{Stochastic Advection Equation for Singular Values Dynamics}\label{s.LocalRelaxationFlow}
\subsection{Stochastic advection equation}
We follow the comparison method via coupling in \cite{bourgade2016fixed}. As in \cite{landon2019fixed}, consider the interpolation between a general sample covariance matrix and the Wishart ensemble for the initial data: for any $ \nu \in [0,1] $, let
$$ x_k^{(\nu)}(0)=\nu s_k(0)+(1-\nu)r_k(0),\ \ \ -N \leq k \leq N,\ k \neq 0. $$
where $ s_k(t) $ and $ r_k(t) $ satisfy the singular values dynamics \eqref{e.SymmetrizedDynamics}, with respective initial conditions a general sample covariance matrix and the Wishart ensemble. Define the corresponding dynamics of $ x_k^{(\nu)} $ to be
\begin{equation}\label{e.DBM}
\d x_k^{(\nu)} = \dfrac{\d B_k}{\sqrt{N}} + \left[ -\dfrac{1}{2\xi}x_k^{(\nu)} + \dfrac{1}{2}\left( \dfrac{1}{\xi} -1 \right) \dfrac{1}{x_k^{(\nu)}} + \dfrac{1}{2N} \sum_{l \neq \pm k} \dfrac{1}{x_k^{(\nu)} - x_l^{(\nu)}} \right]\d t .
\end{equation}
For this Dyson Brownian motion we consider the quantity
$$ \u_k^{(\nu)}(t):=e^{\frac{t}{2\xi}}\dfrac{\d}{\d\nu}x_k^{(\nu)}(t). $$
From now we set $ \nu \in (0,1) $ and omit it from the notation for simplicity. A significant property is that $ \u_k $ satisfies a non-local parabolic differential equation
\begin{equation}\label{e.parabolic}
\dfrac{\d}{\d t}\u_{k}=\dfrac{1}{2}\left( 1- \dfrac{1}{\xi} \right)\dfrac{\u_k}{x_k^2} + \dfrac{1}{2N} \sum_{l \neq \pm k}\dfrac{\u_l - \u_k}{\left(x_l - x_k \right)^2}.
\end{equation}
Let $ \v_k=\v_k^{(\nu)} $ solve the same equation as $ \u_k $ in \eqref{e.parabolic} but with initial condition $ \v_k(0)=|\u_k(0)|=|s_k(0)-r_k(0)| $. An important result is that this equation yields a maximum principle for $ \v_k $, which will be useful in later analysis for the estimate of its growth.

\begin{lemma}\label{l.MaxPrin}
For all $ t \geq 0 $ and $ -N \leq k \leq N $, we have
$$ \v_k(t) \geq 0, \ \ \ |\v_k(t)| \leq \max_k |\v_k(0)|,\ \ \ |\u_k(t)| \leq \v_k(t). $$
\end{lemma}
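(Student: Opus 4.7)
The plan is to treat the system \eqref{e.parabolic} as a non-autonomous linear ODE in the coordinates $(\v_k)_{-N \leq k \leq N,\ k \neq 0}$, with the paths $(x_k(t))_k$ of the DBM \eqref{e.DBM} viewed as fixed coefficients. Two structural features drive the argument: the off-diagonal couplings $1/(2N(x_l-x_k)^2)$ are nonnegative, and the diagonal term $\tfrac12(1-1/\xi)/x_k^2$ carries the sign $1-1/\xi \leq 0$ in our regime $\xi \in (0,1]$. All three conclusions will follow from maximum-principle arguments applied to suitable linear combinations of $\v_k$ and $\u_k$.

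I would begin with the nonnegativity $\v_k \geq 0$ via a first-touch argument with a linear time perturbation. Fix small $\eps > 0$ and set $w_k(t) := \v_k(t) + \eps t$, so that $w_k(0) \geq 0$. If $w_k(t) < 0$ ever occurred, by continuity there would be a first time $t^* > 0$ and an index $k_0$ with $w_{k_0}(t^*) = 0$ while $w_l(t) \geq 0$ for $t \leq t^*$ and all $l$; in particular $\v_{k_0}(t^*) = -\eps t^*$ minimizes $\v_l(t^*)$. Plugging into \eqref{e.parabolic}, the coupling sum $\tfrac{1}{2N}\sum_{l \neq \pm k_0}(\v_l(t^*) - \v_{k_0}(t^*))/(x_l - x_{k_0})^2 \geq 0$ because $\v_l(t^*) \geq \v_{k_0}(t^*)$, and the diagonal contribution $\tfrac12(1-1/\xi)\v_{k_0}(t^*)/x_{k_0}(t^*)^2 \geq 0$ is the product of two nonpositive factors divided by a positive one. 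Hence $\tfrac{d}{dt}w_{k_0}(t^*) \geq \eps > 0$, contradicting first touch from above. Sending $\eps \to 0$ gives $\v_k(t) \geq 0$.

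The upper bound $\v_k(t) \leq \max_l \v_l(0)$ follows from the symmetric argument applied to $z_k(t) := \max_l \v_l(0) - \v_k(t) + \eps t$: at a hypothetical first-touch time, the coordinate $\v_{k_0}(t^*)$ is maximal so the coupling sum is nonpositive, and by the nonnegativity already proved together with $1-1/\xi \leq 0$ the diagonal term is also nonpositive, again yielding the contradiction $\tfrac{d}{dt}z_{k_0}(t^*) \geq \eps > 0$. For the final claim $|\u_k| \leq \v_k$ I would exploit linearity of \eqref{e.parabolic}: since $\u_k$ satisfies the same equation as $\v_k$, the combinations $\v_k \pm \u_k$ also solve \eqref{e.parabolic}, with initial data $\v_k(0) \pm \u_k(0) = |\u_k(0)| \pm \u_k(0) \geq 0$. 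Applying the nonnegativity result of the previous step to each combination gives $\v_k(t) \pm \u_k(t) \geq 0$, i.e.\ $|\u_k(t)| \leq \v_k(t)$.

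The main technical obstacle is the non-autonomous nature of \eqref{e.parabolic}: the coefficients $1/(x_l - x_k)^2$ and $1/x_k^2$ blow up at collisions of the $x_k$'s and, when $\xi = 1$, at the hard edge $x_k = 0$. Standard no-collision results for the DBM \eqref{e.DBM} together with the rigidity estimates of \sref{LocalLaw} ensure that almost surely, on any compact time interval, the paths stay bounded away from these singular configurations, so the linear ODE for $\v_k$ is classically well-posed and the first-touch arguments above run pathwise. A minor bookkeeping check is that the initial symmetry $\v_{-k}(0) = \v_k(0)$ (inherited from $s_{-k}=-s_k$ and $r_{-k}=-r_k$) is propagated by \eqref{e.parabolic} under $x_{-k} = -x_k$ via a change of summation variable $l \mapsto -l$, so the excluded-index set $l \neq \pm k$ remains symmetric and compatible with the maximum-principle argument.
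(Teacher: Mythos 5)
Your proposal is correct and follows essentially the same maximum-principle strategy as the paper: nonnegativity at a minimizing index using $1-1/\xi \leq 0$, then the bound at a maximizing index using the nonnegativity already obtained, then linearity to transfer to $\v \pm \u$. You merely spell out the first-touch argument with an $\eps$-perturbation and add a well-posedness remark, which makes the sketch more rigorous but does not change the route.
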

\begin{proof}
Note that for the coefficients in the summation part of equation \eqref{e.parabolic} we have $ \frac{1}{(x_l - x_k)^2} >0 $. Therefore for $ f(t) := \min_k \v_k(t) $, we have
$$ f'(t) \geq \dfrac{1}{2} \left( 1-\dfrac{1}{\xi} \right) \dfrac{1}{x_1^2} f(t). $$
Combined with the fact $ \v_k(0) \geq 0 $, this gives us the first claim. For the second claim, since $ \v_k $'s are nonnegative, we know that $ \frac{\d}{\d t}\max_{k} \v_k  \leq 0 $ and this yields the desired result. The third claim follows from linearity. We know that both $ \v + \u $ and $ \v - \u $ satisfy the equation \eqref{e.parabolic}, and we also have $ (\v + \u)(0) \geq 0 $ and $ (\v - \u)(0) \geq 0 $. Similarly to the first claim, this gives us $ \v_k(t) + \u_k(t) \geq 0 $ and $ \v_k(t) - \u_k(t)  \geq 0 $, which completes the proof.
\end{proof}

\begin{remark}
Due to our choice for the initial value of $ \v $, we will have that $ \{\v_k\}_{-N \leq k \leq N} $ are symmetric with respect to the label $ k $. To see this, it is easy to check that $ \tilde{\v}_k := \v_{-k} $ satisfy the same equation \eqref{e.parabolic}. Note that the equation is linear and $ \tilde{\v}_k(0)=\v_k(0) $ for all $ k $. Lemma \ref{l.MaxPrin} then gives us $ \v_k(t) = \tilde{\v}_k(t) = \v_{-k}(t) $ for all $ k $ and $ t \geq 0 $.
\end{remark}

We now consider the observable
$$ f_t(z)=e^{-\frac{t}{2\xi}}\sum_{-N \leq k \leq N}\dfrac{\v_k(t)}{x_k(t)-z}. $$
A key observation is that the quadratic singularities in \eqref{e.parabolic} will disappear when combined with the Dyson Brownian motion, so that the evolution of $ f_t $ has no shocks similarly to a result from \cite{bourgade2018extreme}. Denoting
$$ s_t(z)=\dfrac{1}{2N}\sum_{-N \leq k \leq N}\dfrac{1}{x_k(t)-z} $$
the Stieltjes transform of the (symmetrized) empirical spectral measure, then the observable $ f_t $ satisfies the following dynamics.

\begin{lemma}\label{l.dynamics}
For any $  \im z \neq 0 $, we have
\begin{equation}\label{e.dynamics}
\begin{aligned}
\d f_t &=\left(s_t(z)+\dfrac{z}{2\xi}\right)(\partial_z f_t)\d t+\dfrac{1}{4N}(\partial_{zz}f_t)\d t+\left[\dfrac{e^{-\frac{t}{2\xi}}}{2N}\sum_{-N \leq k \leq N}\dfrac{\v_k}{(x_k-z)^2(x_k+z)}\right]\d t\\
& \quad + \left[\left( 1- \dfrac{1}{\xi} \right) e^{-\frac{t}{2\xi}} \left( \sum_{-N \leq k \leq N} \dfrac{3z \v_k}{2 x_k^2 (x_k -z)(x_k+z)} + \sum_{-N \leq k \leq N} \dfrac{z^3 \v_k}{x_k^2 (x_k-z)^2 (x_k+z)^2} \right)\right] \d t\\
& \quad -\dfrac{e^{-\frac{t}{2\xi}}}{\sqrt{N}}\sum_{-N \leq k \leq N}\dfrac{\v_k}{(x_k-z)^2}\d B_k.
\end{aligned}
\end{equation}
\end{lemma}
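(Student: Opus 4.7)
The plan is a direct application of Itô's formula to $f_t(z)$, viewed as a function of the processes $(x_k(t))_{-N\le k\le N}$ and the deterministic functions $(\v_k(t))_{-N\le k\le N}$, followed by a symmetrization trick and a cancellation of quadratic singularities. Throughout, I would write
\begin{equation*}
df_t=-\frac{1}{2\xi}f_t\,dt+e^{-\frac{t}{2\xi}}\sum_k\left(\frac{d\v_k}{x_k-z}-\frac{\v_k}{(x_k-z)^2}dx_k+\frac{\v_k}{N(x_k-z)^3}dt\right),
\end{equation*}
where $d\v_k/dt$ is supplied by the parabolic equation \eqref{e.parabolic}, $dx_k$ by the Dyson Brownian motion \eqref{e.DBM}, and $d\langle x_k\rangle=dt/N$.

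First I would peel off the stochastic integral, which comes only from the $\frac{dB_k}{\sqrt{N}}$ part of $dx_k$ and produces exactly the last martingale term in \eqref{e.dynamics}. Next I would isolate the Ornstein–Uhlenbeck contribution: the drift $-x_k/(2\xi)$ in $dx_k$ together with the prefactor $e^{-t/(2\xi)}$ in $f_t$. Using $x_k=(x_k-z)+z$ turns $\sum_k \v_k x_k/(x_k-z)^2$ into $e^{t/(2\xi)}f_t/(2\xi)+(z/2\xi)e^{t/(2\xi)}\partial_zf_t$; the first piece cancels the $-f_t/(2\xi)$ generated by differentiating the exponential, and the second piece supplies the $(z/2\xi)\partial_z f_t$ part of the stated advection drift. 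The Itô-quadratic variation term $\sum_k \v_k/(N(x_k-z)^3)$ will combine with a second contribution arising from the pair-interaction sums below to produce the $\frac{1}{4N}\partial_{zz}f_t$ viscosity term, via the identity $\sum_k\v_k/(x_k-z)^3=\tfrac12 e^{t/(2\xi)}\partial_{zz}f_t$.

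The heart of the calculation — and the main obstacle — is the cancellation of the formally quadratic singularities. The two-body interactions contribute
\begin{equation*}
S_A+S_B=\frac{1}{2N}\sum_k\sum_{l\ne\pm k}\left[\frac{\v_l-\v_k}{(x_k-z)(x_l-x_k)^2}-\frac{\v_k}{(x_k-z)^2(x_k-x_l)}\right].
\end{equation*}
Relabeling $k\leftrightarrow l$ in the $\v_l$-piece of $S_A$ and using the identity
\begin{equation*}
\frac{1}{(x_l-z)(x_k-x_l)}-\frac{1}{(x_k-z)(x_k-x_l)}=\frac{1}{(x_k-z)(x_l-z)}
\end{equation*}
eliminates $(x_l-x_k)^{-2}$ and reduces $S_A+S_B$ to $\frac{1}{2N}\sum_k\frac{\v_k}{(x_k-z)^2}\sum_{l\ne\pm k}\frac{1}{x_l-z}$. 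Splitting the inner sum as $2Ns_t(z)-\frac{1}{x_k-z}+\frac{1}{x_k+z}$ gives the $s_t(z)\partial_z f_t$ drift, the additional $-\frac{1}{2N}\sum_k\v_k/(x_k-z)^3$ which combines with the Itô term to produce $\frac{1}{4N}\partial_{zz}f_t$, and the remainder $\frac{e^{-t/(2\xi)}}{2N}\sum_k \v_k/((x_k-z)^2(x_k+z))$, matching the third bracket in \eqref{e.dynamics}.

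Finally, the $(1/\xi-1)$ terms from \eqref{e.DBM} and \eqref{e.parabolic} combine into
\begin{equation*}
\tfrac{1}{2}(1-1/\xi)\sum_k \v_k\left[\frac{1}{x_k^2(x_k-z)}+\frac{1}{x_k(x_k-z)^2}\right]=\tfrac{1}{2}(1-1/\xi)\sum_k\frac{\v_k(2x_k-z)}{x_k^2(x_k-z)^2}.
\end{equation*}
Here I would invoke the symmetry $\v_{-k}=\v_k$, $x_{-k}=-x_k$ from the remark to pair $k$ with $-k$, rewriting the sum as $\sum_{k>0}\frac{\v_k}{x_k^2}\bigl(\frac{2x_k-z}{(x_k-z)^2}-\frac{2x_k+z}{(x_k+z)^2}\bigr)$. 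Expanding the numerator yields $2z(3x_k^2-z^2)/((x_k-z)^2(x_k+z)^2)$, and the algebraic decomposition
\begin{equation*}
\frac{z(3x_k^2-z^2)}{2x_k^2(x_k-z)^2(x_k+z)^2}=\frac{3z}{2x_k^2(x_k-z)(x_k+z)}+\frac{z^3}{x_k^2(x_k-z)^2(x_k+z)^2}
\end{equation*}
produces precisely the fourth line of \eqref{e.dynamics}. The bookkeeping for these $(1/\xi-1)$ pieces, together with the swap argument used to kill the $(x_l-x_k)^{-2}$ singularity, is the technical core; everything else is organizing the Itô expansion and recognizing $\partial_z f_t$ and $\partial_{zz}f_t$ inside resolvent-type sums.
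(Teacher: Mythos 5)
Your proposal follows the same route as the paper's proof: apply It\^o's formula to $f_t$, split off the martingale part, cancel the $-f_t/(2\xi)$ against the Ornstein--Uhlenbeck drift via $x_k=(x_k-z)+z$, relabel $k\leftrightarrow l$ in the two-body sums to eliminate the $(x_l-x_k)^{-2}$ singularity and recognize $s_t(z)\partial_z f_t$ together with the $(x_k+z)$ correction, combine the leftover $(x_k-z)^{-3}$ piece with the It\^o quadratic variation term to form $\frac{1}{4N}\partial_{zz}f_t$, and finally use the symmetry $\v_{-k}=\v_k$, $x_{-k}=-x_k$ to reorganize the $(1-1/\xi)$ pieces into the two stated sums. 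All the algebraic identities you invoke check out, and the bookkeeping is sound; the only cosmetic quibble is the displayed equality $\sum_k \v_k x_k/(x_k-z)^2 = e^{t/(2\xi)}f_t/(2\xi)+(z/2\xi)e^{t/(2\xi)}\partial_z f_t$, which as written should not carry the $1/(2\xi)$ on the right (that factor comes from the drift coefficient, as your surrounding prose makes clear).
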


\begin{proof}
This can be proved by direct computation via the It\^{o}'s formula. First, we have
$$ \d f=-\dfrac{f}{2\xi}\d t+e^{-t/2}\sum_{-N \leq k \leq N}\dfrac{\d\v_k}{x_k-z}+e^{-t/2}\sum_{-N \leq k \leq N}\v_k \d\dfrac{1}{x_k-z} =: A_1 + A_2 +A_3. $$
By using It\^{o}'s formula again we have $ \d (x_k-z)^{-1}=-(x_k-z)^{-2}\d x_k+\frac{1}{N}(x_k-z)^{-3}\d t $. Thus, we can now decompose the term $ A_3 $ as $ I_1+[I_2+I_3+I_4+I_5]\d t $, where
\begin{align*}
I_1 &=
-\dfrac{e^{-\frac{t}{2\xi}}}{\sqrt{N}}\sum_{-N \leq k \leq N}\dfrac{\v_k}{(x_k-z)^2}\d B_k, \\
I_2 &=
\dfrac{e^{-\frac{t}{2\xi}}}{2\xi}\sum_{-N \leq k \leq N}\dfrac{\v_k x_k}{(x_k-z)^2}, \\
I_3 &=
\dfrac{1}{2} \left( 1-\dfrac{1}{\xi} \right) e^{-\frac{t}{2\xi}} \sum_{-N \leq k \leq N} \dfrac{\v_k}{x_k(x_k-z)^2}, \\
I_4 &= e^{-\frac{t}{2\xi}}\sum_{-N \leq k \leq N}\v_k\left(-\dfrac{1}{(x_k-z)^2}\right)\dfrac{1}{2N}\sum_{l \neq \pm k}\dfrac{1}{x_k-x_l}=\dfrac{e^{-\frac{t}{2\xi}}}{2N}\sum_{l \neq \pm k}\dfrac{\v_k}{(x_k-z)^2(x_l-x_k)}, \\
I_5 &= \dfrac{e^{-\frac{t}{2\xi}}}{N}\sum_{-N \leq k \leq N}\dfrac{\v_k}{(x_k-z)^3}.
\end{align*}
Note that
$$ \partial_z f=e^{-\frac{t}{2\xi}}\sum_{-N \leq k \leq N}\dfrac{\v_k}{(x_k-z)^2},\ \ \ \partial_{zz}f=2e^{-\frac{t}{2\xi}}\sum_{-N \leq k \leq N}\dfrac{\v_k}{(x_k-z)^3}. $$
For the term $ A_2 $, by the equation \eqref{e.parabolic}, we have
$$ A_2 = e^{-\frac{t}{2\xi}}\sum_{-N \leq k \leq N}\dfrac{1}{x_k-z} \left[ \dfrac{1}{2} \left( 1-\dfrac{1}{\xi} \right) \dfrac{\v_k}{x_k^2}  + \dfrac{1}{2N}\sum_{l \neq \pm k}\dfrac{\v_l-\v_k}{(x_k-x_l)^2} \right]=: B_1+B_2. $$
Note that
\begin{multline*}
B_2 = \dfrac{e^{-\frac{t}{2\xi}}}{4N}\sum_{l \neq \pm k}\dfrac{\v_l-\v_k}{(x_l-x_k)^2}\left(\dfrac{1}{x_k-z}-\dfrac{1}{x_l-z}\right)=\dfrac{e^{-\frac{t}{2\xi}}}{4N}\sum_{l \neq \pm k}\dfrac{\v_l-\v_k}{x_l-x_k}\dfrac{1}{x_k-z}\dfrac{1}{x_l-z}\\
=-\dfrac{e^{-\frac{t}{2\xi}}}{2N}\sum_{l \neq \pm k}\dfrac{\v_k}{x_l-x_k}\dfrac{1}{x_k-z}\dfrac{1}{x_l-z}.
\end{multline*}
Combining with $ I_4 $, we obtain
$$ B_2 + I_4 =\dfrac{e^{-\frac{t}{2\xi}}}{2N}\sum_{l \neq \pm k}\dfrac{\v_k}{x_l-x_k}\dfrac{1}{x_k-z}\left(\dfrac{1}{x_k-z}-\dfrac{1}{x_l-z}\right)=\dfrac{e^{-\frac{t}{2\xi}}}{2N}\sum_{l \neq \pm k}\dfrac{\v_k}{(x_k-z)^2}\dfrac{1}{x_l-z}. $$
Moreover, we have that
\begin{equation*}
B_2+I_4+I_5 = s(z)\partial_z f+\dfrac{e^{-\frac{t}{2\xi}}}{2N}\sum_{-N \leq k \leq N}\dfrac{\v_k}{(x_k-z)^3}+\dfrac{e^{-\frac{t}{2\xi}}}{2N}\sum_{-N \leq k \leq N}\dfrac{\v_k}{(x_k-z)^2(x_k+z)}.
\end{equation*}
Then it suffices to calculate $ B_1 + I_3 $, and note that
\begin{align*}
B_1 + I_3 &= \dfrac{1}{2} \left( 1- \dfrac{1}{\xi} \right) e^{-\frac{t}{2\xi}} \sum_{k=1}^N \left( \dfrac{\v_k}{x_k -z}\dfrac{1}{x_k^2} - \dfrac{\v_k}{x_k +z}\dfrac{1}{x_k^2} + \dfrac{\v_k}{(x_k - z)^2}\dfrac{1}{x_k} - \dfrac{\v_k}{(x_k +z)^2}\dfrac{1}{x_k} \right)\\
&= \left( 1- \dfrac{1}{\xi} \right) e^{-\frac{t}{2\xi}} \left( \sum_{-N \leq k \leq N} \dfrac{3z \v_k}{2 x_k^2 (x_k -z)(x_k+z)} + \sum_{-N \leq k \leq N} \dfrac{z^3 \v_k}{x_k^2 (x_k-z)^2 (x_k+z)^2} \right).
\end{align*}
Note that now all the singularities are removed.

The desired result then follows by combining the previous results and the term $ I_1 $.
\end{proof}

Recall that in Section \ref{s.LocalLaw} we have shown that the Stieltjes transform of the empirical measure for singular values satisfies the local law \eqref{e.LocalLaw}, so that the leading term of the stochastic differential equation \eqref{e.dynamics} satisfied by $ f_t $ is close to
$$ \dfrac{z}{2\xi} + z m_{\MP}(z^2) = \dfrac{z}{2\xi} + \dfrac{1-\xi-z^2 + \sqrt{(z^2 - \lambda_-)(z^2 - \lambda_+)}}{2\xi z} = \dfrac{(1-\xi) + \sqrt{(z^2 - \lambda_-)(z^2 - \lambda_+)}}{2\xi z}. $$
Thus, the dynamics of $ f_t $ can be approximated by the following advection equation
\begin{equation}\label{e.AdvectionPDE}
\partial_t r = \dfrac{(1-\xi) + \sqrt{(z^2 - \lambda_-)(z^2 - \lambda_+)}}{2\xi z} \partial_z r.
\end{equation}

\subsection{Geometric properties of the characteristics}\label{s.geometry}
In order to estimate the evolution of the observable, we analyze its dynamics \eqref{e.dynamics} by studying the characteristics of the approximate advection PDE \eqref{e.AdvectionPDE}, similarly to \cite{huang2018rigidity,bourgade2018extreme}. To do this, we first need some bounds on the shape of the characteristics $ (z_t)_{t \geq 0} $, and some estimates for the initial value. As mentioned in Section \ref{s.LocalLaw}, discussions for the case $ \xi=1 $ are expected to be different due to the singularity of the Marchenko-Pastur distribution (which results in a distinct shape of the density $ \rho(x) $ for singular values). Therefore, in this and the subsequent section, we first discuss the case $ \xi \neq 1 $ and will show how to adapt the proof to the case for square data matrices in Section \ref{s.CaseOne}.

Denote
$$ \kappa(z) := \min\left\{\left|z-\sqrt{\lambda_-}\right|,\left|z-\sqrt{\lambda_+}\right|\right\}, $$
and
$$ a(z) := \dist \left(z,\left[\sqrt{\lambda_-},\sqrt{\lambda_+}\right]\right),\ \ \ b(z):= \dist \left( z, \left[\sqrt{\lambda_-},\sqrt{\lambda_+}\right]^c \right) $$
We consider the curve
$$ \S := \left\{ z=E+\i y: \sqrt{\lambda_-} + \varphi^2 N^{-2/3} <E< \sqrt{\lambda_+} - \varphi^2 N^{-2/3},\ y=\varphi^2/\left( N \kappa(E)^{1/2} \right) \right\}, $$
and the domain $ \mathscr{R} := \cup_{0<t<1}\{z_t:z \in \S\} $.

\begin{lemma}\label{l.characteristics}
Uniformly in $ 0<t<1 $ and $ z=z_0 $ satisfying $ \eta := \im z>0 $ and $ |z-\sqrt{\lambda_+}|<\sqrt{\xi}/10 $, we have
$$ \re (z_t -z_0) \sim t \dfrac{a(z)}{\kappa(z)^{1/2}}+t^2,\ \ \ \ \im(z_t-z_0) \sim t\dfrac{b(z)}{\kappa(z)^{1/2}}. $$
In particular, if in addition we have $ z \in \S $, then
$$ (z_t - z_0) \sim \left( t \dfrac{\varphi^2}{N \kappa(E)} +t^2 \right) + \i \kappa(E)^\frac{1}{2} t.  $$
Moreover, for any $ \kappa >0 $, uniformly in $ 0<t<1 $ and $ z=E+\i \eta \in [\sqrt{\lambda_-} + \kappa,\sqrt{\lambda_+} -\kappa] \times [0,\kappa^{-1}] $, we have $ \im (z_t-z_0) \sim t $.
\end{lemma}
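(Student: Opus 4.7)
The plan is to analyse the complex ODE for the characteristics of \eqref{e.AdvectionPDE},
$$ \dot z_t = v(z_t), \qquad v(z) := \frac{(1-\xi)+\sqrt{(z^2-\lambda_-)(z^2-\lambda_+)}}{2\xi z}, $$
and to read off the motion of $z_t$ from a local expansion of $v$ near the right edge. To expose the square-root structure I would first change variables to $u_t = z_t^2$, obtaining
$$ \dot u_t = \xi^{-1}\bigl((1-\xi)+\sqrt{(u_t-\lambda_-)(u_t-\lambda_+)}\bigr), $$
and then shift to $\zeta_t := u_t - \lambda_+$, so that the factor under the square root becomes $\zeta_t(\zeta_t + 4\sqrt{\xi})$. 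The hypothesis $|z-\sqrt{\lambda_+}|<\sqrt{\xi}/10$ forces $|\zeta_0|\lesssim 1$, and the expansion $\sqrt{\zeta(\zeta+4\sqrt{\xi})} = 2\xi^{1/4}\sqrt{\zeta}\bigl(1+O(|\zeta|)\bigr)$ reduces the problem to understanding the principal branch of $\sqrt{\zeta}$ on a small disk about $0$, combined with the linearisation $\zeta\approx 2\sqrt{\lambda_+}(z-\sqrt{\lambda_+})$.

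The second step is to convert the geometric quantities $a(z)$, $b(z)$, $\kappa(z)$ into pointwise estimates on $\re\bigl(v(z)-v(\sqrt{\lambda_+})\bigr)$ and $\im v(z)$. Using the elementary identities $\re\sqrt{\zeta}=\sqrt{(|\zeta|+\re\zeta)/2}$ and $|\im\sqrt{\zeta}|=\sqrt{(|\zeta|-\re\zeta)/2}$, together with $|\zeta|\sim\kappa(z)$, $\re\zeta\sim 2\sqrt{\lambda_+}(E-\sqrt{\lambda_+})$, $\im\zeta\sim 2\sqrt{\lambda_+}\eta$, a short case-split on the sign of $E-\sqrt{\lambda_+}$ identifies $|\zeta|+\re\zeta \sim a(z)$ and $|\zeta|-\re\zeta\sim b(z)$ up to constants, hence
$$ \re\bigl(v(z)-v(\sqrt{\lambda_+})\bigr)\sim \frac{a(z)}{\kappa(z)^{1/2}}, \qquad \im v(z) \sim \frac{b(z)}{\kappa(z)^{1/2}}. $$
The remaining constant $v(\sqrt{\lambda_+})=(1-\xi)/(2\xi\sqrt{\lambda_+})$ is real and nonzero; it produces a linear-in-$t$ drift of $\re z_t$ which, combined with the higher-order Lipschitz error along the flow, accounts for the $t^2$ term in the final statement.

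The third step is integration along the flow. Since $v$ is smooth and Lipschitz on any compact subset of the closed upper half plane avoiding $\{\sqrt{\lambda_\pm}\}$, a standard Gronwall bound yields $z_t - z_0 = t\,v(z_0) + O(t^2)$ uniformly for $t\in(0,1)$, and combined with Step~2 this gives the stated $\sim$-comparisons for both real and imaginary parts of $z_t-z_0$. The specialisation to $z\in\S$ then follows immediately: there $\eta = \varphi^2/(N\kappa(E)^{1/2})\ll\kappa(E)$, so $a(z)\sim\eta$ and $b(z)\sim\kappa(E)$, producing $\re(z_t-z_0)\sim t\varphi^2/(N\kappa(E))+t^2$ and $\im(z_t-z_0)\sim t\kappa(E)^{1/2}$. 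The bulk regime is easier: when $E\in[\sqrt{\lambda_-}+\kappa,\sqrt{\lambda_+}-\kappa]$ and $\eta\le\kappa^{-1}$, the roots $\sqrt{z^2-\lambda_\pm}$ are smooth and $\im v$ is bounded below by a constant depending on $\kappa$, so $\im(z_t-z_0)\sim t$ is direct. The main obstacle is the quantitative case-split in Step~2: distinguishing $E>\sqrt{\lambda_+}$ (outside the spectrum, where $a(z)\sim\kappa(z)$ and $b(z)\sim\eta$) from $E<\sqrt{\lambda_+}$ (inside the spectrum, where $a(z)\sim\eta$ and $b(z)\sim\kappa(z)$), proving matching $\lesssim$ and $\gtrsim$ bounds in each regime, and simultaneously verifying that $z_t$ never approaches the branch points during $0<t<1$, is where the bulk of the care is required.
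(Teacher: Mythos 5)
Your proposal takes a genuinely different route from the paper: you do a direct local expansion of the vector field $v=g$ near the branch point, whereas the paper compares the flow $\dot z_t=g(z_t)$ with the explicitly solvable semicircle flow $\dot z_{sc}=g_{sc}(z_{sc})$, $g_{sc}(z)=\sqrt{(z-1)^2-\xi}/\xi$, on the domain swept out by the latter's characteristics, and then imports \cite[Lemma 2.2]{bourgade2018extreme}. A direct analysis is in principle fine, but as written your argument has two concrete gaps.

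\emph{The constant drift is not reconciled with the conclusion.} You correctly observe that $v(\sqrt{\lambda_+})=(1-\xi)/(2\xi\sqrt{\lambda_+})$ is real and nonzero for $\xi\neq1$ and say it ``accounts for the $t^2$ term.'' It cannot: a nonzero constant drift gives $\re(z_t-z_0)\sim t$, which for $z\in\S$ (where $a(z)/\kappa(E)^{1/2}\ll 1$) is \emph{much larger} than $t\,a(z)/\kappa(E)^{1/2}+t^2$ over the full range $0<t<1$. So either the constant has to cancel against something, or the stated two-sided comparison fails. Tracing through Lemma~\ref{l.dynamics}, the $(1-\xi^{-1})$-weighted sums produce, near the edge, an additional coefficient of $\partial_z f_t$ whose leading value at $z=\sqrt{\lambda_+}$ is $(1-\xi^{-1})/(2\sqrt{\lambda_+})=-g(\sqrt{\lambda_+})$; it is this cancellation that makes the \emph{effective} advection speed vanish at the edge and the square-root behaviour emerge. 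Neither your proposal nor the vector field $g$ alone sees this. (The same point is awkward for the paper's own proof: $g_{sc}(\sqrt{\lambda_+})=0$ while $g(\sqrt{\lambda_+})\neq0$, so the claimed comparison $g\sim g_{sc}$ on $\Omega$ cannot hold at points near $\sqrt{\lambda_+}$ with the paper's definition of $\sim$; the reduction to \cite[Lemma 2.2]{bourgade2018extreme} would need the constant to be removed.)

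\emph{Gronwall gives the wrong kind of estimate.} The lemma asserts a two-sided $\sim$-comparison in which the $t^2$ piece is a genuine positive contribution that dominates for $t\gtrsim\kappa(E)^{1/2}$. A bound of the form $z_t-z_0=t\,v(z_0)+O(t^2)$ is useless precisely in that regime: the ``error'' has the same order as the quantity to be estimated, so you get no lower bound. Moreover $v$ is not Lipschitz near the branch point (its derivative blows up like $\kappa^{-1/2}$), so ``a standard Gronwall bound uniform in $0<t<1$'' is not available, and one would in any case have to verify that $z_s$ does not loop back near $\sqrt{\lambda_+}$ during the flow. The semicircle comparison sidesteps this because the corresponding ODE is integrable: $z_{sc}(t)-2\approx(\sqrt{z_0-2}+ct)^2$ is exact to leading order, exhibiting the $t^2$ term and the lower bound directly. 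A repaired direct argument should solve (or closely mimic the solution of) $\dot\zeta=a+c\sqrt{\zeta}$ for the actual constant $a$ in play, not truncate after one step of Picard iteration. As a smaller point, your intermediate identification ``$|\zeta|-\re\zeta\sim b(z)$'' is off: the correct relation is $|\zeta|-\re\zeta\sim b(z)^2/\kappa(z)$ (which still yields $\im v\sim b(z)/\kappa(z)^{1/2}$, so your end formula is right for the wrong stated reason).

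Your treatment of the bulk statement is fine, but the edge case, which is the heart of the lemma, needs the two issues above resolved before the argument is complete.
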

\begin{figure}[ht]
\includegraphics[height=4cm]{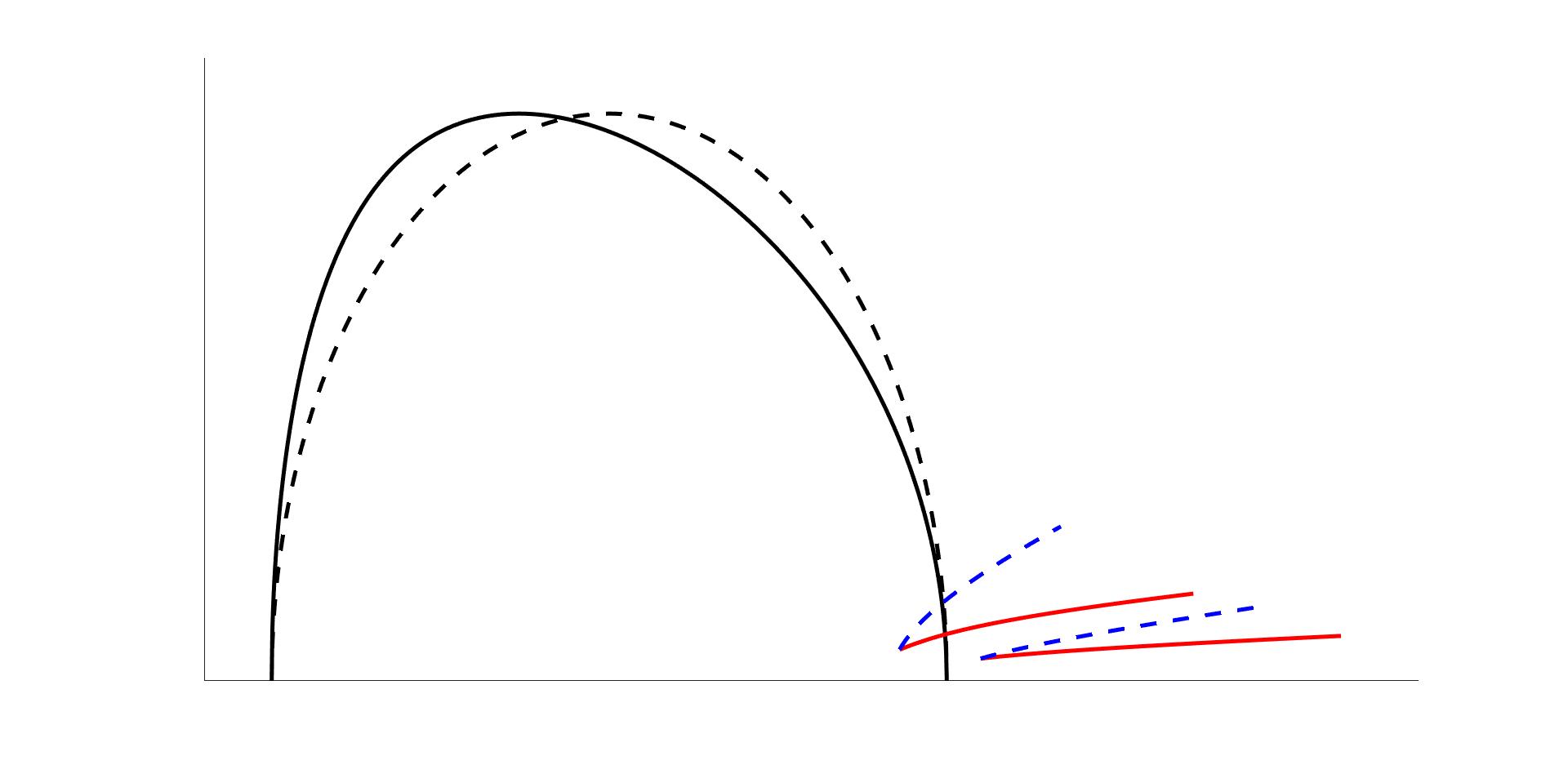}
\caption{Shape of the characteristics near the edge: the solid lines represent the density $ \rho(x) $ and the corresponding characteristics; the dashed lines represent the semicircle distribution and the related characteristics.}
\end{figure}
\begin{proof}
It is too complicated to work with the ODE satisfied by $ z_t $ in a direct way. The main idea is to compare this characteristics with the corresponding curve for a semicircle distribution (which has a explicit and simple formula). Define the two functions
$$ g(z):=\dfrac{(1-\xi) + \sqrt{(z^2 - \lambda_-)(z^2 - \lambda_+)}}{2\xi z} , \ \ \ g_{sc}(z):=\dfrac{\sqrt{(z-1)^2 - \xi}}{\xi}. $$
For some $ |z_0-\sqrt{\lambda_+}| < \sqrt{\xi}/10,\ \im z_0>0 $, let $ z(t) := z_t $ and $ z_{sc}(t) $ solve the following two initial value problems
\begin{equation*}
\left\{
\begin{aligned}
& \dfrac{\d z}{\d t} = g(z)\\
& z(0)=z_0
\end{aligned}
\right.
,\ \ \ \ \ \ \
\left\{
\begin{aligned}
& \dfrac{\d z_{sc}}{\d t} = g_{sc}(z_{sc})\\
& z_{sc}(0)=z_0
\end{aligned}
\right.
.
\end{equation*}
Note that in $ \Omega=\{z_{sc}(t):0<t<1,|z_0-\sqrt{\lambda_+}| < \sqrt{\xi}/10,\im z_0>0 \} $ we have $ g(z) \sim g_{sc}(z) $. This shows that for $ 0<t<1 $, we have $ (z_t-z_0) \sim (z_{sc}(t) -z_0) $. The rest of the proof now follows from \cite[Lemma 2.2]{bourgade2018extreme}.
\end{proof}

Furthermore, we have the following lemma regarding the growth of the characteristics, which will be useful for the error estimates in the local relaxation.

\begin{lemma}\label{l.ChracteristcsInt}
For any $ z=E+\i \eta \in \S $, we have
$$ \dfrac{\varphi^4}{N^2} \int_0^t ds \int \dfrac{d \rho(x)}{|z_{t-s} -x|^4 \max(\kappa(x),s^2)} \lesssim \dfrac{\kappa(E)}{\max(\kappa(E),t^2)}. $$
\end{lemma}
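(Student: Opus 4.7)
The plan is to bound the inner $x$-integral pointwise in $s$ using the square-root behavior of $\rho$ near the edges together with the geometric information on $z_{t-s}$ from \lref{characteristics}, and then integrate in $s$. Since $z\in\S$ gives $\eta=\varphi^2/(N\kappa(E)^{1/2})$ and hence $\varphi^4/N^2=\eta^2\kappa(E)$, it suffices to show
\begin{equation*}
\eta^2 \int_0^t ds \int \frac{d\rho(x)}{|z_{t-s}-x|^4 \max(\kappa(x), s^2)} \lesssim \frac{1}{\max(\kappa(E), t^2)}.
\end{equation*}
Setting $\tau:=t-s$, $M:=\max(\kappa(E),\tau^2)$, $\eta_\tau:=\im z_\tau$, and $\beta_\tau:=\sqrt{\lambda_+}-\re z_\tau$, \lref{characteristics} yields $\eta_\tau\sim\eta+\kappa(E)^{1/2}\tau$ and $|z_\tau-\sqrt{\lambda_\pm}|\sim M$, with $\beta_\tau>0$ of order $\kappa(E)$ when $\tau\lesssim\kappa(E)^{1/2}$ and $\beta_\tau<0$ of order $-\tau^2$ otherwise.

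For the inner integral I change variables near the relevant edge, $x=\sqrt{\lambda_+}-u$ with $d\rho(x)\sim\sqrt{u}\,du$; contributions from the other three edges are comparable or smaller by symmetry. When $\beta_\tau>0$ the integrand $\sqrt{u}/(\max(u,s^2)((u-\beta_\tau)^2+\eta_\tau^2)^2)$ has a narrow peak at $u=\beta_\tau\sim M$ of width $\sim\eta_\tau$ and height $\sim \sqrt{M}/(\max(M,s^2)\,\eta_\tau^4)$, giving
\begin{equation*}
\int \frac{d\rho(x)}{|z_\tau-x|^4 \max(\kappa(x), s^2)} \lesssim \frac{\sqrt{M}}{\eta_\tau^3 \max(M, s^2)};
\end{equation*}
while away from the peak, splitting $u\lessgtr M$ and $u\lessgtr s^2$ produces the lower-order term $1/(M^{5/2}\max(M,s^2))$. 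When $\beta_\tau\le 0$ there is no peak inside $[0,\infty)$ and $(u-\beta_\tau)^2+\eta_\tau^2\gtrsim\max(u,M)^2$, so the same splitting gives only the second bound.

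For the $s$-integration I split $[0,t]$ at $s_*:=\max(t-\kappa(E)^{1/2},0)$. On $[s_*,t]$ we have $\tau\le\kappa(E)^{1/2}$ and $\beta_\tau>0$, so $M\sim\kappa(E)$; substituting $r=\kappa(E)^{1/2}\tau$ and using $\max(\kappa(E),s^2)\sim\max(\kappa(E),t^2)$ along this interval yields
\begin{equation*}
\int_{s_*}^t \frac{\sqrt{\kappa(E)}}{\eta_\tau^3 \max(\kappa(E),s^2)}\,ds \;\lesssim\; \frac{1}{\kappa(E)^{1/2}\max(\kappa(E),t^2)}\int_0^{\kappa(E)}\frac{dr}{(\eta+r)^3} \;\lesssim\; \frac{1}{\eta^2 \max(\kappa(E),t^2)}.
\end{equation*}
On $[0,s_*]$ (nontrivial only when $t>\kappa(E)^{1/2}$), $M=(t-s)^2$ and splitting further at $s=t/2$ gives $\int_0^{s_*}1/(M^{5/2}\max(M,s^2))\,ds\lesssim 1/(\kappa(E)^2 t^2)$. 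After multiplying by $\eta^2$ and invoking $\eta\lesssim\kappa(E)$ (which follows from $\kappa(E)\ge\varphi^2 N^{-2/3}$ and the definition of $\eta$), both pieces are controlled by $1/\max(\kappa(E),t^2)$.

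The main obstacle is extracting the narrow-peak contribution when $\beta_\tau>0$: the crude pointwise bound $1/|z_\tau-x|^4\le 1/(\eta_\tau^2|z_\tau-x|^2)$ together with the identity $\int d\rho(x)/|z_\tau-x|^2=\im m(z_\tau)/\eta_\tau\sim\sqrt{M}/\eta_\tau$ would give $\int d\rho/|z_\tau-x|^4\lesssim\sqrt{M}/\eta_\tau^3$, losing exactly the $\max(\kappa(x),s^2)$ regularization near the edge. This loses a factor $\max(M,s^2)/M$, which is precisely the gap between a final bound of size $1$ and the required $\kappa(E)/\max(\kappa(E),t^2)$; tracking the peak and the regularization simultaneously, and distinguishing the ``inside spectrum'' ($\tau\lesssim\kappa(E)^{1/2}$) and ``outside spectrum'' ($\tau\gtrsim\kappa(E)^{1/2}$) regimes of the characteristic flow, is the essential technical step.
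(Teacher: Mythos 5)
Your proof is correct and takes essentially the same route as the paper: the paper's own proof just cites the computation of \cite[Lemma~A.2]{bourgade2018extreme} after noting (via $m(z)=zm_{\MP}(z^2)$ and the estimates from \cite{bloemendal2016principal}) that $\rho$ and $\im m$ have the same square-root edge behavior as the semicircle law, and your proposal reproduces that computation explicitly, with the same change of variable at the edge, the same peak-plus-tail split in $u=\sqrt{\lambda_+}-x$ keeping the $\max(\kappa(x),s^2)$ regularization, and the same time split at $\tau\sim\kappa(E)^{1/2}$. One cosmetic slip: in the displayed chain for the $[s_*,t]$ piece, the numerator $\sqrt{\kappa(E)}$ and the Jacobian $1/\kappa(E)^{1/2}$ from $r=\kappa(E)^{1/2}\tau$ should cancel, giving $\frac{1}{\max(\kappa(E),t^2)}\int_0^{\kappa(E)}\frac{dr}{(\eta+r)^3}\lesssim\frac{1}{\eta^2\max(\kappa(E),t^2)}$; the extra $\kappa(E)^{-1/2}$ in your intermediate expression is compensated by an overly generous bound on the $r$-integral, so the final estimate is right but the middle line does not parse as a literal inequality.
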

\begin{proof}
The proof is essentially the same as in \cite[Lemma A.2]{bourgade2018extreme}, except now we need to consider the Stieltjes transform $ m(z) $ instead of the one for the semicircle law. Recall the relation $ m(z)=z m_{\MP}(z^2) $, and note that for the Stieltjes transform of the Marchenko-Pastur law we have the following results from \cite[Lemma 3.6]{bloemendal2016principal}:
\begin{equation*}
|m_{\MP}(z)| \sim 1,
\ \ \ \ \
\im m_{\MP}(z) \sim \left\{
\begin{aligned}
& \sqrt{\kappa(E)+\eta} & \mbox{if } & E \in [\lambda_-,\lambda_+],\\
&\frac{\eta}{\sqrt{\kappa(E)+\eta}} & \mbox{if } & E \notin [\lambda_-,\lambda_+].
\end{aligned}
\right.
\end{equation*}
The rest of the proof follows from the calculation in \cite[Lemma A.2]{bourgade2018extreme}.
\end{proof}

We now consider the initial value $ f_0 $ on the curve $ \S $. For this purpose we define the set of good trajectories such that the rigidity holds:
\begin{equation*}
\A := \left\{ |x_k^{(\nu)}(t) - \gamma_k|<\varphi^{\frac{1}{2}}N^{-\frac{2}{3}}(\hat{k})^{-\frac{1}{3}} \ \mbox{for all}\ 0 \leq t \leq 1, k \in \llbracket 1,N \rrbracket,0 \leq \nu \leq 1 \right\}.
\end{equation*}
We have the following important estimate for the probability of these events.

\begin{lemma}\label{l.rigidity}
There exists a fixed $ C_0 $ large enough such that the following holds. For any $ D>0 $, there exists $ N_0(D)>0 $ such that for any $ N > N_0 $ we have
$$ \P(\A) > 1-N^{-D}. $$
\end{lemma}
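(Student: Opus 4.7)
The plan is to establish rigidity at each fixed $(t, \nu)$ with high probability, and then upgrade to uniform rigidity via a discretization-and-continuity argument. The pointwise rigidity splits into two cases. At the endpoints $\nu \in \{0, 1\}$, the particles $(x_k^{(\nu)}(t))_k$ are the singular values of $M_0^{(\nu)} + B_t/\sqrt{N}$, which is still a sample covariance-type data matrix satisfying \eqref{e.Assumption1}–\eqref{e.Assumption2} with adjusted variance, so rigidity at any fixed $t \in [0,1]$ follows directly from \eqref{e.RigidityNotOne}. For intermediate $\nu \in (0, 1)$, on the intersection of the initial rigidity events for $(s_k(0))_k$ and $(r_k(0))_k$ — which has probability at least $1 - 2N^{-D'}$ — the triangle inequality yields
$$|x_k^{(\nu)}(0) - \gamma_k| \leq \nu |s_k(0) - \gamma_k| + (1-\nu)|r_k(0) - \gamma_k| \leq \varphi^{1/2} N^{-2/3} \hat{k}^{-1/3},$$
so the interpolated initial particles inherit rigidity. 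This in turn produces a local law for the initial empirical Stieltjes transform, which can be propagated to all $t \in [0,1]$ under the DBM \eqref{e.DBM} by the standard stability analysis of the self-consistent equation for $s_t(z)$, giving rigidity at every fixed $(t, \nu)$ with probability at least $1 - N^{-D'}$.

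To make this uniform, I would introduce a grid $\mathcal{G} \subset [0,1]^2$ with spacing $N^{-C}$ in each coordinate, so $|\mathcal{G}| \leq N^{2C}$. Taking $D' = D + 2C + 1$ and applying a union bound over $\mathcal{G}$ gives rigidity at every grid point simultaneously with probability at least $1 - N^{-D}$. It remains to control the oscillation of $x_k^{(\nu)}(t)$ between adjacent grid points. For the $\nu$-oscillation, Lemma \ref{l.MaxPrin} supplies
$$|\partial_\nu x_k^{(\nu)}(t)| = e^{-t/(2\xi)} |\u_k(t)| \leq \max_k \v_k(0) = \max_k |s_k(0) - r_k(0)|,$$
which is $O(1)$ with overwhelming probability by standard operator-norm bounds on the initial matrices, so between adjacent $\nu$-grid points the particles move by at most $N^{-C} \cdot O(1)$. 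For the $t$-oscillation, the SDE \eqref{e.DBM} on an interval of length $N^{-C}$ has drift of size $N^{-C} \cdot N^{O(1)}$ (using rigidity at the grid endpoints to bound $\sum_{l \neq \pm k} 1/(x_k - x_l)$ polynomially) and martingale part of size $O(N^{-(C+1)/2} \varphi)$. Choosing $C$ sufficiently large (say $C = 10$) makes both quantities much smaller than the rigidity scale $\varphi^{1/2} N^{-2/3} \hat{k}^{-1/3}$, so rigidity extends to all of $[0,1]^2$.

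The principal technical obstacle lies in the DBM propagation of rigidity for intermediate $\nu$: because no matrix ensemble underlies $(x_k^{(\nu)}(t))_k$ for $\nu \in (0,1)$, this step must be carried out purely at the level of the particle system, starting from the local law for the interpolated initial data and relying on a stability estimate for the associated self-consistent equation down to mesoscopic scales $\eta \sim \varphi/N$. This is standard in the contemporary DBM literature, but one has to verify that the usual short-time regularization arguments function without any matrix realization of the initial configuration.
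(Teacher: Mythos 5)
Your high-level plan matches the paper's brief outline (discretize in $t$ and $\nu$, control increments between grid points, union bound), and you correctly identify the central technical obstacle: rigidity at intermediate $\nu$ cannot simply be inherited from a matrix model, because the convex interpolation is performed at the level of singular values rather than matrices. Two concrete points deserve attention.

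The $t$-oscillation step has a gap. You propose to bound the drift $\frac{1}{2N}\sum_{l\neq\pm k}(x_k-x_l)^{-1}$ of \eqref{e.DBM} ``polynomially'' using rigidity at the grid endpoints, but rigidity constrains $|x_k-\gamma_k|$ from above and yields no lower bound on the gaps $|x_k - x_l|$; consecutive particles can approach one another well inside their rigidity tubes, and this sum is not pointwise polynomially bounded. The paper avoids the issue by invoking Weyl's inequality at the \emph{matrix} level: at $\nu\in\{0,1\}$ the matrix flow gives $|x_k(t)-x_k(t')|\le \|M_t-M_{t'}\| \lesssim N^{-1/2}\|B_t-B_{t'}\|$, which never touches the singular drift, and then the increment at intermediate $\nu$ is handled through the $\partial_\nu$ bound supplied by Lemma \ref{l.MaxPrin} (for which the crude $O(1)$ operator-norm bound you quote can in fact be sharpened to $\lesssim\varphi^{1/2}N^{-2/3}$ under the initial rigidity). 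Replacing Weyl by a direct SDE estimate, as you do, would require a separate level-repulsion input.

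On the other point, your instinct that a DBM-propagation argument is genuinely needed is sound: the maximum principle controls $\max_k\v_k(t)$ only uniformly in $k$, so for intermediate $\nu$ it yields $|x_k^{(\nu)}(t)-x_k^{(1)}(t)|\lesssim\max_j|s_j(0)-r_j(0)|\lesssim\varphi^{1/2}N^{-2/3}$, which is edge-scale rigidity but not the $\hat k$-dependent bound demanded by $\A$ in the bulk. Your route — convexity of the initial interpolation, then propagation of rigidity under the particle-system DBM via the self-consistent equation for $s_t(z)$ — is the appropriate way to obtain the stronger statement, and it is what the cited references \cite{bourgade2018extreme,erdHos2015gap} carry out; you identify this clearly but do not supply it, so it remains the unproven crux of the proposal.
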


As mentioned in the last section, the rigidity estimates are proved in \cite{pillai2014universality} for fixed $ t $ and $ \nu=0,1 $. The extension to all $ t $ and $ \nu $ is based on the arguments in \cite{bourgade2018extreme,erdHos2015gap}: (1) discretize in $ t $ and $ \nu $; (2) use Weyl's inequality to bound the increments in small time intervals; (3) use the maximum principle to bound the increment in small $ \nu $-intervals.

Conditioned on the rigidity phenomenon, we have the following estimate for the initial conditions. The proof is the same as \cite[Lemma 2.4]{bourgade2018extreme}.

\begin{lemma}\label{l.InitialValue}
In the set $ \A $, for any $ z=E+\i \eta \in \mathscr{R} $, we have $ \im f_0(z) \lesssim \varphi^{1/2} $ if $ \eta > \max(E-\sqrt{\lambda_+},-E+\sqrt{\lambda_-}) $, and $ \im f_0(z) \lesssim \varphi^{1/2} \frac{\eta}{\kappa(z)} $ otherwise. The same bound also holds for $ |\im f_0| $.
\end{lemma}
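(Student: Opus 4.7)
My approach follows \cite[Lemma 2.4]{bourgade2018extreme}: bound $\v_k(0)$ by rigidity, approximate the resulting sum by an integral in $x$, and evaluate that integral in each of the two regimes.

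On the event $\A$, rigidity at $t=0$ applied to both $\nu=0$ (the Wishart trajectory $r_k$) and $\nu=1$ (the general trajectory $s_k$) gives $|s_k(0)-\gamma_k|,|r_k(0)-\gamma_k|\le \varphi^{1/2}N^{-2/3}(\hat{k})^{-1/3}$, so by the triangle inequality
\[
\v_k(0)=|s_k(0)-r_k(0)|\le 2\varphi^{1/2}N^{-2/3}(\hat{k})^{-1/3}.
\]
The square-root behaviour of $\rho$ at $\pm\sqrt{\lambda_+}$ together with the usual bulk scaling give $\gamma_{k+1}-\gamma_k\sim N^{-2/3}(\hat{k})^{-1/3}$ uniformly in $k$, hence $\v_k(0)\lesssim \varphi^{1/2}(\gamma_{k+1}-\gamma_k)$.

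Since $\v_k(0)\ge 0$ (Lemma~\ref{l.MaxPrin}), one has
\[
\im f_0(z)=\sum_{1\le|k|\le N}\v_k(0)\,\frac{\eta}{(x_k(0)-E)^2+\eta^2}.
\]
Lemma~\ref{l.characteristics} combined with the definition of $\S$ ensures $\eta\gtrsim \varphi^2/(N\kappa(E)^{1/2})$ throughout $\mathscr{R}$, so the rigidity error $|x_k(0)-\gamma_k|$ is $\ll \eta$ for indices $k$ with $\gamma_k$ close to $E$, while for the remaining indices the denominator is essentially unchanged. Replacing $x_k(0)$ by $\gamma_k$ then costs only a multiplicative constant, and the bound on $\v_k(0)$ turns the resulting expression into a Riemann sum:
\[
\im f_0(z)\lesssim \varphi^{1/2}\sum_{1\le|k|\le N}(\gamma_{k+1}-\gamma_k)\,\frac{\eta}{(\gamma_k-E)^2+\eta^2}\lesssim \varphi^{1/2}\int_{-\sqrt{\lambda_+}}^{\sqrt{\lambda_+}}\frac{\eta\,dx}{(x-E)^2+\eta^2}.
\]

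The last integral is computed via the antiderivative $\arctan((x-E)/\eta)$. In the first regime, $\eta$ is at least the lateral distance from $E$ to $[-\sqrt{\lambda_+},\sqrt{\lambda_+}]$, so the arctangent difference is bounded by $\pi$ and $\im f_0(z)\lesssim \varphi^{1/2}$. In the second regime, $E$ lies at distance $\kappa(z)$ outside $[-\sqrt{\lambda_+},\sqrt{\lambda_+}]$ with $\eta<\kappa(z)$, and an asymptotic expansion of $\arctan(\cdot/\eta)$ at the two endpoints of integration gives an arctangent difference of order $\eta/\kappa(z)$, hence the second bound. Because $\im f_0(z)\ge 0$, the bound for $|\im f_0|$ is identical.

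The main technical point, and the step I expect to be most delicate, is the uniform replacement of $x_k(0)$ by $\gamma_k$ in the denominator, especially near the edge and for $k$ such that $\gamma_k\approx E$. For such indices the rigidity error must be dominated by $\im z$; this is precisely guaranteed by the explicit lower bound on $\eta$ built into $\S$ and preserved along the flow by the characteristics estimates of Lemma~\ref{l.characteristics}.
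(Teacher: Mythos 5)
Your proof is correct and follows the same approach as \cite[Lemma 2.4]{bourgade2018extreme}, which the paper simply cites without further elaboration: bound $\v_k(0)$ via rigidity on $\A$, relate it to the local spacing $\gamma_{k+1}-\gamma_k\sim N^{-2/3}\hat k^{-1/3}$, replace $x_k(0)$ by $\gamma_k$ in the Poisson kernel, and estimate the resulting Riemann sum by the arctangent integral. One small imprecision: the intermediate claim that $\eta\gtrsim\varphi^2/(N\kappa(E)^{1/2})$ throughout $\mathscr{R}$ is not literally true (as $E_t$ approaches $\sqrt{\lambda_+}$, $\kappa(E_t)$ shrinks while $\eta_t$ only grows from its starting value $\varphi^2/(N\kappa(E_0)^{1/2})$, so the stated lower bound can fail). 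What is actually used — and what does hold — is that the rigidity error $\varphi^{1/2}N^{-2/3}\hat k^{-1/3}\sim\varphi^{1/2}/(N\kappa(\gamma_k)^{1/2})$ is dominated by $|\gamma_k-E|+\eta$ for every $k$, which follows because $\S$ is defined with a $\varphi^2$ prefactor while rigidity only costs $\varphi^{1/2}$, leaving a $\varphi^{3/2}$ margin that persists along the characteristics. With that corrected justification the replacement step, and hence the whole argument, is sound.
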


\subsection{Quantitative relaxation at the edge}\label{s.relaxation}
To prove the edge universality, we first have the following estimate for the size of the observable $ f_t $.

\begin{proposition}\label{p.estimate}
For any (large) $ D>0 $ there exists $ N_0(D) $ such that for any $ N \geq N_0 $ we have
$$ \P\left( \im f_t(z) \lesssim \varphi \dfrac{\kappa(E)^{1/2}}{\max(\kappa(E)^{1/2},t)} \ \mbox{\rm for all} \ 0<t<1 \ \mbox{\rm and} \ z=E+\i\eta \in \S \right) > 1-N^{-D} $$
\end{proposition}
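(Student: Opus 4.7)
The plan is to propagate the initial bound of \lref{InitialValue} forward in time along the characteristics of the approximate advection PDE~\eqref{e.AdvectionPDE}. Fix $z\in\S$ and $t\in(0,1)$, let $(z_s)_{0\le s\le t}$ be the characteristic of \lref{characteristics} with $z_0=z$, and set $G_s:=f_s(z_{t-s})$, so that $G_t=f_t(z)$ and $G_0=f_0(z_t)$. Applying It\^o's formula to~\eqref{e.dynamics} and using $g(z)=m(z)+z/(2\xi)$, the leading transport term cancels up to the local-law remainder, yielding
$$ f_t(z) \;=\; f_0(z_t) \;+\; \int_0^t \Bigl[(s_s-m)(z_{t-s})\,\partial_z f_s(z_{t-s}) + \tfrac{1}{4N}\partial_{zz}f_s(z_{t-s}) + \mcl{E}(s)\Bigr]\,ds \;+\; \mcl{M}_t, $$
where $\mcl{E}(s)$ collects the two deterministic brackets in~\eqref{e.dynamics} specific to the covariance setting and $\mcl{M}_t$ is the resulting martingale.

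The initial datum $f_0(z_t)$ is handled by combining \lref{characteristics} and \lref{InitialValue}: when $t\gtrsim\kappa(E)^{1/2}$, the point $z_t$ lies outside the spectrum with $\re z_t-\sqrt{\lambda_+}\sim t^2$ and $\im z_t\sim\kappa(E)^{1/2}t$, so $|\im f_0(z_t)|\lesssim\varphi^{1/2}\kappa(E)^{1/2}/t$; when $t\lesssim\kappa(E)^{1/2}$, $\re z_t$ remains in the bulk, any positive $\im z_t$ exceeds the (negative) boundary gap and $|\im f_0(z_t)|\lesssim\varphi^{1/2}$. Both regimes match the target up to the subpolynomial factor $\varphi$.

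For the remainder, on the event $\A$ of \lref{rigidity} I use three inputs: (i) the positivity $\v_k\ge 0$ from \lref{MaxPrin} gives the Stieltjes-type bounds $|\partial_z^j f_s(w)|\lesssim\im f_s(w)/(\im w)^j$; (ii) the local law~\eqref{e.LocalLaw} gives $|s_s(w)-m(w)|\lesssim\varphi/(N\,\im w)$; (iii) the characteristic estimate \lref{ChracteristcsInt} integrates the singular kernel along the trajectory. The covariance-specific bracket $\mcl{E}(s)$ is rewritten using the evenness of $\{\v_k\}$ in the index (so that $\sum_k \v_k/(x_k^2-z^2)$ becomes a Stieltjes transform of the symmetrized measure) and absorbs the $(1-\xi^{-1})$ factors through the rigidity bound $|x_k|\gtrsim 1$ available on $\A$ in the regime $\xi<1$, reducing it to the same class of integrals estimated in (ii)-(iii). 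A stopping-time bootstrap then closes the estimate: let $\tau$ be the first time $s\le t$ at which $\im f_s(z_{t-s})$ exceeds $\Lambda$ times the target for a large fixed $\Lambda$; on $\{\tau<t\}\cap\A$ the displayed identity forces $\im f_\tau(z_{t-\tau})\le\tfrac{\Lambda}{2}$ times the target, contradicting the definition of $\tau$. The martingale is handled via Burkholder--Davis--Gundy, its quadratic variation reducing through (i) to an integral of the type bounded by \lref{ChracteristcsInt}.

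Uniformity over $(t,z)\in(0,1)\times\S$ follows from Lipschitz continuity of $(t,z)\mapsto f_t(z)$ on $\A$ by discretizing to a polynomial grid and taking a union bound. The main technical obstacle is the bracket $\mcl{E}(s)$, which has no analogue in the Wigner edge relaxation of~\cite{bourgade2018extreme}: its $1/x_k$ factors must be absorbed via rigidity and the symmetrization, and its $1/(x_k\pm z)$ factors merged with the main singular kernel so that \lref{ChracteristcsInt} still applies. Once this book-keeping is carried out, the remainder of the argument mirrors the edge relaxation for the Wigner case.
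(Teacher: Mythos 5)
Your proposal takes essentially the same route as the paper: propagate the initial bound from \lref{InitialValue} along the characteristics of \lref{characteristics}, control the drift via the local law and the Stieltjes-type derivative bounds that follow from the positivity of $\v_k$, absorb the covariance-specific brackets using rigidity (so $|x_k|\gtrsim 1$) and merging the $1/(x_k\pm z)$ factors with the main kernel, bound the martingale via Burkholder--Davis--Gundy and \lref{ChracteristcsInt}, and close with a stopping-time bootstrap plus a discretization/union-bound step for uniformity. The breakdown of the initial datum into the two regimes $t\gtrless\kappa(E)^{1/2}$ matches the paper's use of \lref{InitialValue}, and your observation that the $(1-\xi^{-1})$ terms reduce to integrals of $\im f_u$ after rigidity is precisely what Step~3 of the paper does. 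The one place your sketch is a bit glib is the final uniformity step: the map $t\mapsto f_t(z)$ is \emph{not} pathwise Lipschitz (the driving noise is only H\"older), so ``Lipschitz continuity on $\A$'' doesn't quite cover the $t$-discretization; the paper instead introduces the events $A_{l,m,k}$ which control the martingale increments on each sub-interval $[t_l,t_{l+1}]$ via BDG, and this is a real step, not a triviality. With that bookkeeping filled in, your argument is the paper's argument.
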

\begin{proof}
For any $ 1 \leq l,m \leq N^{10} $, we define $ t_l:=l N^{-10} $ and
$$ z^{(m)} := E_m + \i\eta_m=E_m+\i\dfrac{\varphi^2}{N \kappa(E_m)^{1/2}}, $$
where
$$ E_m := \inf\left\{ E >0 : \int_{-\infty}^{E^2} \rho_{\MP}(x)\d x \geq \left(m-\frac{1}{2}\right)N^{-10} \right\}. $$
We also define the following stopping times (with respect to $ \mathcal{F}_t = \sigma(B_k(s):0 \leq s \leq t,1 \leq k \leq N) $) which represent the bad events:
$$ \tau_{l,m} := \inf \left\{ 0 \leq s \leq t_l : \im f_s(z_{t-s}^{(m)}) \gtrsim \dfrac{\varphi}{2}\dfrac{\kappa(E_m)^{1/2}}{\max(\kappa(E_m)^{1/2},t_l)}  \right\}, $$
$$ \tau_0 := \inf \left\{ 0 \leq t \leq1: \exists k \in \llbracket -N,N \rrbracket \ \mbox{s.t.} \  |x_k(t)-\gamma_k|>\varphi^{\frac{1}{2}}N^{-\frac{2}{3}}(\hat{k})^{-\frac{1}{3}} \right\}, $$
$$ \tau := \min \left\{ \tau_0,\tau_{l,m}: 0 \leq l,m \leq N^{10}, \kappa(E_m) > \varphi^2 N^{-\frac{2}{3}} \right\}. $$
We also define the convention $ \inf \emptyset =1 $.

We claim that in order to prove the desired result, it suffices to show that for any $ D>0 $ there exists $ \tilde{N}_0(D) $ such that for any $ N \geq \tilde{N}_0(D) $, we have
\begin{equation}\label{e.ProbabilityTau}
\P(\tau=1) > 1-N^{-D}.
\end{equation}

\emph{Step 1.} To see the claim would be enough, we first show the following sets inclusion
\begin{equation}\label{e.SetsInclusion}
\{ \tau=1 \} \bigcap_{\substack{1\leq l,m \leq N^{10} \\ -N \leq k \leq N}} A_{l,m,k} \subset \bigcap_{z \in \S, 0<t<1} \left\{ \im f_t(z) \lesssim \varphi \dfrac{\kappa(E)^{1/2}}{\max(\kappa(E)^{1/2},t)} \right\},
\end{equation}
where
$$ A_{l,m,k} := \left\{ \sup_{t_l \leq u \leq t_{l+1}} \left| \int_{t_l}^u \dfrac{e^{-\frac{s}{2\xi}} \v_k(s) \d B_k(s)}{(z^{(m)} - x_k(s))^2} \right| <N^{-3} \right\}.  $$
To prove this, for any given $ z $ and $ t $, choose $ t_l $ and $ z^{(m)} $ such that $ t_l \leq t <t_{l+1} $ and $ |z-z^{(m)}|<N^{-5} $. Note that by rigidity and the maximum principle (Lemma \ref{l.MaxPrin}) we have $ |\v_k(t)| \lesssim \varphi N^{-2/3} $. Combining this with the definition of $ f_t $, we have $ |f_t(z) - f_t(z^{(m)})| < N^{-2} $. Moreover, note that we have the following estimates
\begin{multline*}
\sup_{t_l \leq u \leq t_{l+1}} \left| \int_{t_l}^u \dfrac{e^{-\frac{t}{2\xi}}}{2N}\sum_{-N \leq k \leq N} \dfrac{\v_k}{(x_k-z)^2(x_k+z)} \d t \right|\\
\lesssim N^{-10}  \dfrac{1}{2N} \varphi N^{-2/3} \max_{t_l \leq t \leq t_{l+1}} \sum_{-N \leq k \leq N} \left| \dfrac{1}{(x_k-z)^2(x_k+z)} \right| < N^{-5},
\end{multline*}
and
\begin{multline*}
\sup_{t_l \leq u \leq t_{l+1}} \left| \int_{t_l}^u \left( 1 - \dfrac{1}{\xi} \right) e^{-\frac{t}{2\xi}} \sum_{-N \leq k \leq N} \dfrac{z \v_k}{x_k^2(x_k -z) (x_k+z)} \d t \right|\\
\lesssim N^{-10} \varphi N^{-2/3} \max_{t_l \leq t \leq t_{l+1}} \sum_{-N \leq k \leq N} \left| \dfrac{1}{(x_k -z) (x_k+z)} \right| < N^{-5},
\end{multline*}
\begin{multline*}
\sup_{t_l \leq u \leq t_{l+1}} \left| \int_{t_l}^u \left( 1 - \dfrac{1}{\xi} \right) e^{-\frac{t}{2\xi}} \sum_{-N \leq k \leq N} \dfrac{z^3 \v_k}{ x_k^2 (x_k -z)^2 (x_k+z)^2} \d t \right|\\
\lesssim N^{-10} \varphi N^{-2/3} \max_{t_l \leq t \leq t_{l+1}} \sum_{-N \leq k \leq N} \left| \dfrac{1}{(x_k -z)^2 (x_k+z)^2} \right| < N^{-5}.
\end{multline*}
Similarly, we also have
$$ \sup_{t_l \leq u \leq t_{l+1}} \left| \int_{t_l}^u  \left(s_t(z)+\dfrac{z}{2\xi}\right)(\partial_z f_t)  \d t \right| < N^{-5},\ \ \ \  \sup_{t_l \leq u \leq t_{l+1}} \left| \int_{t_l}^u  \dfrac{1}{4N}(\partial_{zz}f_t)  \d t \right| < N^{-5}. $$
Then based on the dynamics \eqref{e.dynamics}, under the event $ \cap_k A_{l,m,k} $, we have $ |f_t(z^{(m)}) - f_{t_l}(z^{(m)})| < N^{-2} $. This completes the proof for \eqref{e.SetsInclusion}.

\emph{Step 2.} To prove the claim, we also need to estimate the probability for the events $ A_{l,m,k} $. To do this, we use the following Burkholder-Davis-Gundy type inequality (see e.g. \cite[Appendix B.6]{shorack2009empirical}). For some fixed constants $ c>0 $ and  $ \alpha>0 $, for any martingale $ M $ we have
\begin{equation}\label{e.BDG}
\P \left( \sup_{0 \leq u \leq t} |M_u| \geq \alpha \la M \ra _t^{\frac{1}{2}} \right) \leq e^{-c \alpha^2}.
\end{equation}
Note that we have the deterministic bound $ \int_{t_l}^u \frac{|\v_k (s)|^2 \d s}{|z^{(m)} - x_k(s)|^4} \ll N^{-6} $. By taking $ \alpha = \varphi^{1/10} $, this implies $ \P(A_{l,m,k}) \geq 1 - e^{-c \varphi^{1/5}} $, and then a union bound yields
$$ \P \left( \bigcap_{1\leq l,m \leq N^{10}, -N \leq k \leq N} A_{l,m,k} \right) > 1-N^{-D}. $$
Together with the sets inclusion \eqref{e.SetsInclusion}, this concludes that \eqref{e.ProbabilityTau} implies the desired result.

\emph{Step 3.} It remains to prove \eqref{e.ProbabilityTau}. For simplicity of notations, let $ t=t_l $, $ z=E+\i\eta=z^{(m)} $ for some arbitrary fixed $ 1 \leq l,m \leq N^{10} $. Consider the function $ g_u(z) := f_u(z_{t-u}) $. By Lemma \ref{l.characteristics} and Lemma \ref{l.InitialValue}, we have $ \im g_0(z) \lesssim \frac{\varphi}{10}\frac{\kappa(E_m)^{1/2}}{\max(\kappa(E_m)^{1/2},t)} $. Therefore we only need to bound the increments of $ g $. Using Lemma \ref{l.dynamics}, by the It\^{o}'s formula we know it satisfies the following stochastic differential equation
\begin{equation}
d g_{u \wedge \tau}(z) = \ep_u(z_{t-u})\d (u \wedge \tau) - \dfrac{e^{-\frac{u}{2\xi}}}{\sqrt{N}}\sum_{-N \leq k \leq N}\dfrac{\v_k(u)}{(z_{t-u}-x_k(u))^2}\d B_k(u \wedge \tau),
\end{equation}
where
\begin{align*}
\ep_u(z) &:= (s_u(z)-m(z))\partial_z f_u + \dfrac{1}{4N} (\partial_{zz} f_u) + \dfrac{e^{-\frac{u}{2\xi}}}{2N} \sum_{-N \leq k \leq N} \dfrac{\v_k(u)}{(x_k -z)^2 (x_k+z)}\\
&\quad  + \left( 1- \dfrac{1}{\xi} \right) e^{-\frac{t}{2\xi}} \left( \sum_{-N \leq k \leq N} \dfrac{3z \v_k}{2 x_k^2 (x_k -z)(x_k+z)} + \sum_{-N \leq k \leq N} \dfrac{z^3 \v_k}{x_k^2 (x_k-z)^2 (x_k+z)^2} \right).
\end{align*}
In this step, we aim to estimate the first term $ \sup_{0 \leq s \leq t}|\int_0^s \ep_u(z_{t-u}) \d (u \wedge \tau)| $. First, we have
\begin{multline*}
\int_0^t \left| \left( s_u(z_{t-u}) -m(z_{t-u}) \right) \partial_z f (z_{t-u}) \right|\d (u \wedge \tau)\\ \lesssim \int_0^t \dfrac{\varphi}{N \im (z_{t-u})} \sum_{-N \leq k \leq N} \dfrac{|\v_k(u)|}{|z_{t-u} -x_k(u)|^2} \d (u \wedge \tau)
\lesssim \int_0^t \dfrac{\varphi \im f_u(z_{t-u})}{N \left( \im (z_{t-u}) \right)^2}\d (u \wedge \tau) \\
\lesssim \int_0^t \dfrac{\varphi^2 du}{N (\eta + (t-u)\kappa(z)^{1/2})^2}\dfrac{\kappa(E)^{1/2}}{\max(\kappa(E)^{1/2},t)} \lesssim \dfrac{\kappa(E)^{1/2}}{\max \left( \kappa(E)^{1/2},t \right)}.
\end{multline*}
To bound the $ |s_u - m| $ term above, we have used the local law for singular values \eqref{e.LocalLaw} simultaneously for all $ 0 \leq u \leq t $ (which is similar to Lemma \ref{l.rigidity}). The last two inequalities follow from Lemma \ref{l.characteristics}. We also have
\begin{equation*}
\sup_{0 \leq s \leq t} \left| \int_0^s \dfrac{1}{4N}(\partial_{zz} f_u(z_{t-u})) \d (u \wedge \tau) \right| \lesssim \int_0^t \dfrac{\im f_u(z_{t-u})}{N \left( \im (z_{t-u}) \right)^2}\d (u \wedge \tau)  \lesssim \dfrac{\kappa(E)^{1/2}}{\varphi \max(\kappa(E)^{1/2},t)},
\end{equation*}
\begin{multline*}
\sup_{0 \leq s \leq t} \left| \int_0^s \dfrac{e^{-\frac{u}{2\xi}}}{2N} \sum_{-N \leq k \leq N} \dfrac{\v_k(u)}{(x_k - z_{t-u})^2(x_k+z_{t-u})} \d (u \wedge \tau) \right|\\
\lesssim \int_0^t  \dfrac{\im f_u(z_{t-u})}{N \left( \im (z_{t-u}) \right)}\d (u \wedge \tau) \lesssim \dfrac{\kappa(E)^{1/2}}{\varphi \max(\kappa(E)^{1/2},t)},
\end{multline*}
And similarly,
\begin{multline*}
\sup_{0 \leq s \leq t} \left| \left( 1-\dfrac{1}{\xi} \right)e^{-\frac{u}{2\xi}} \int_0^s \sum_{-N \leq k \leq N} \dfrac{z_{t-u} \v_k(u)}{x_k^2(x_k(u) - z_{t-u}) (x_k(u) + z_{t-u})} \d (u \wedge \tau) \right|\\
\lesssim \int_0^t \im f_u(z_{t-u}) \d (u \wedge \tau)  \lesssim \dfrac{\varphi}{2}\dfrac{\kappa(E)^{1/2}}{\max \left( \kappa(E)^{1/2},t \right)},
\end{multline*}
\begin{multline*}
\sup_{0 \leq s \leq t} \left| \left( 1-\dfrac{1}{\xi} \right)e^{-\frac{u}{2\xi}} \int_0^s \sum_{-N \leq k \leq N} \dfrac{z_{t-u}^3 \v_k(u)}{x_k^2(x_k(u) - z_{t-u})^2 (x_k(u) + z_{t-u})^2} \d (u \wedge \tau) \right|\\
\lesssim \dfrac{\varphi}{2}\dfrac{\kappa(E)^{1/2}}{\max \left( \kappa(E)^{1/2},t \right)}.
\end{multline*}

\emph{Step 4.} Finally we focus on the estimate for $ \sup_{0 \leq s \leq t}|M_s| $ where
$$ M_s := \int_0^s \dfrac{e^{-\frac{u}{2\xi}}}{\sqrt{N}} \sum_{-N \leq k \leq N} \dfrac{\v_k(u)}{(z_{t-u} - x_k(u))^2} \d B_k(u \wedge \tau). $$
Note that for all $ k $ and $ u < \tau $ we have $ |z_{t-u} - \gamma_k| \lesssim |z_{t-u} - x_k(u)| $ due to the fact $ |x_k(u) - \gamma_k| \ll |z_{t-u} - \gamma_k| $. Using \eqref{e.BDG} again we have
$$ \sup_{0 \leq s \leq t}|M_s|^2 \lesssim \varphi^{\frac{1}{10}} \int_0^t \dfrac{1}{N}\sum_{-N \leq k \leq N} \dfrac{\v_k(u)^2}{|z_{t-u} - \gamma_k|^4} \d (u \wedge \tau) $$
with overwhelming probability. By a similar argument in \cite[equation (2.17)]{bourgade2018extreme} and Lemma \ref{l.ChracteristcsInt}, we conclude
$$ \sup_{0 \leq s \leq t} |M_s|^2 \lesssim \varphi^{1/5} \dfrac{\kappa(E)}{\max(\kappa(E),t^2)}. $$

Hence, based on the previous estimates and a union bound we have proved that for any $ D>0 $ there exists $ N_0 $ such that for every $ N > N_0 $ we have
$$ \P \left( \sup_{0 \leq l,m \leq N^{10},\kappa(E_m)>\varphi^2 N^{-2/3},0 \leq s \leq t_l} \im f_{s \wedge \tau}(z_{t_l - s \wedge \tau}^{(m)}) \lesssim \dfrac{\varphi}{2}\dfrac{\kappa(E_m)^{1/2}}{\max(\kappa(E_m)^{1/2},t_l)} \right) \geq 1- N^{-D}. $$
Together with Lemma \ref{l.InitialValue}, we have proved \eqref{e.ProbabilityTau}, which then completes the proof.
\end{proof}

Based on the previous estimate on $ \im f_t $, we now state the quantitative relaxation of the singular values dynamics at the edge. Remember that $ \{s_k\} $ and $ \{r_k\} $ satisfy the same equation \eqref{e.DBM}, with respective initial conditions that correspond to a general sample covariance matrix and the Wishart ensemble.

\begin{theorem}\label{t.Relaxation}
For any $ D>0 $ and $ \ep>0 $ there exists $ N_0>0 $ such that for any $ N > N_0 $ we have
$$ \P \left( |s_k(t) - r_k(t)| \lesssim \dfrac{N^\ep}{Nt} \ \mbox{for all} \ k \in \llbracket 1,N \rrbracket \ \mbox{in} \ t \in [0,1] \right) > 1-N^{-D}. $$
\end{theorem}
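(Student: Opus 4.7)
The plan is to reduce the bound on $|s_k(t) - r_k(t)|$ to an estimate on the auxiliary function $\v_k^{(\nu)}(t)$ introduced in Lemma \ref{l.MaxPrin}, and then extract that estimate from the a priori control on $\im f_t$ provided by Proposition \ref{p.estimate}.

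First, by the fundamental theorem of calculus along the interpolation path $\nu \mapsto x_k^{(\nu)}(t)$,
$$s_k(t) - r_k(t) = \int_0^1 \partial_\nu x_k^{(\nu)}(t)\,\d\nu = e^{-t/(2\xi)}\int_0^1 \u_k^{(\nu)}(t)\,\d\nu,$$
and Lemma \ref{l.MaxPrin} yields $|\u_k^{(\nu)}(t)| \le \v_k^{(\nu)}(t)$. Thus it suffices to bound $\v_k^{(\nu)}(t)$ uniformly in $\nu \in [0,1]$.

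Second, I would exploit the non-negativity of the $\v_j$ (again Lemma \ref{l.MaxPrin}) to isolate a single $\v_k$ from $\im f_t$. For any $z = E + \i\eta$ with $\eta>0$,
$$\im f_t(z) = e^{-t/(2\xi)} \sum_j \frac{\eta\, \v_j(t)}{(x_j(t)-E)^2+\eta^2} \ge e^{-t/(2\xi)} \frac{\eta\, \v_k(t)}{(x_k(t)-E)^2+\eta^2}.$$
For each $k$, I would pick $E_k$ equal to $\gamma_k$ projected onto $[\sqrt{\lambda_-}+\varphi^2 N^{-2/3},\, \sqrt{\lambda_+}-\varphi^2 N^{-2/3}]$ and $\eta_k := \varphi^2/(N\kappa(E_k)^{1/2})$, so that $z_k := E_k+\i\eta_k \in \S$. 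The rigidity bound in Lemma \ref{l.rigidity} gives $|x_k(t)-\gamma_k| \le \varphi^{1/2} N^{-2/3} \hat{k}^{-1/3}$, hence $|x_k(t)-E_k| \lesssim \varphi^{C} \eta_k$ for some universal $C$, and therefore
$$\v_k(t) \lesssim \varphi^{2C}\, e^{t/(2\xi)}\, \eta_k\, \im f_t(z_k).$$
Plugging in the estimate $\im f_t(z_k) \lesssim \varphi\, \kappa(E_k)^{1/2}/\max(\kappa(E_k)^{1/2}, t)$ from Proposition \ref{p.estimate} together with the definition of $\eta_k$ gives
$$\v_k(t) \lesssim \frac{\varphi^{2C+3}}{N\, \max(\kappa(E_k)^{1/2},\, t)} \le \frac{\varphi^{2C+3}}{Nt} \le \frac{N^{\ep}}{Nt},$$
using $\varphi \ll N^{\ep}$. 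Uniformity in $k$, $\nu$ and $t$ follows from a union bound over a polynomial grid in $(t,\nu)$ combined with the short-time deterministic increment estimates already used in Step 1 of the proof of Proposition \ref{p.estimate}.

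The hard part will be the careful treatment of extreme-edge indices, i.e.\ those $k$ with $\hat{k}$ of order $\varphi^3$ or smaller, for which $\gamma_k$ lies outside the admissible range of $E$ on $\S$. In that regime one is forced to take $E_k$ on the boundary of $\S$, and the ratio $|x_k(t)-E_k|/\eta_k$ can grow to $\sim \varphi^{O(1)}$; this inflates the constant $C$ above by an additive $O(1)$, which is still absorbed into $N^{\ep}$ but requires a little bookkeeping of the $\varphi$-powers to verify.
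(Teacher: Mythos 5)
Your proposal is correct and follows essentially the same route as the paper: the paper's proof consists precisely of (i) bounding $\v_k(t)$ by $\eta_k\,\im f_t(\gamma_k+\i\eta_k)$ via non-negativity of the $\v_j$ and rigidity, (ii) invoking Proposition~\ref{p.estimate}, and (iii) integrating $\u_k^{(\nu)}=\pm\v_k^{(\nu)}$ over $\nu$, delegating the bookkeeping to \cite[Corollary~2.7 and Theorem~2.8]{bourgade2018extreme}. Your explicit treatment of the extreme-edge indices $\hat k\lesssim\varphi^3$ (projecting $\gamma_k$ onto the admissible range of $\S$ and tracking the resulting $\varphi^{O(1)}$ inflation) is exactly the bookkeeping that the cited reference carries out, so it is a welcome addition rather than a deviation.
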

\begin{proof}
Let $ z=\gamma_k + \i \frac{\varphi^2}{N \sqrt{\kappa(\gamma_k)}} \in \S $, then conditioned on the rigidity phenomenon $ \A $, by the nonnegativity of $ \v_k $'s shown in Lemma \ref{l.MaxPrin} we have
$$ |\v_k(t)| \lesssim \dfrac{\varphi^2}{N \sqrt{\kappa(\gamma_k)}} \im f_t(z). $$
By the arguments in \cite[Corollary 2.7]{bourgade2018extreme}, based on Proposition \ref{p.estimate} we conclude
$$ \P \left( |\v_k^{(\nu)}(t)| \lesssim \dfrac{\varphi^{10}}{N} \dfrac{1}{\max((\hat{k}/N)^{1/3},t)} \ \mbox{for all} \ k \in \llbracket 1,N \rrbracket \ \mbox{in} \ t \in [0,1] \right) > 1-N^{-D}. $$
Again by Lemma \ref{l.MaxPrin}, we know $ -\v_k \leq \u_k \leq \v_k $. Therefore we have
$$ \P \left( |\u_k^{(\nu)}(t)| \lesssim \dfrac{\varphi^{10}}{N} \dfrac{1}{\max((\hat{k}/N)^{1/3},t)} \ \mbox{for all} \ k \in \llbracket 1,N \rrbracket \ \mbox{in} \ t \in [0,1] \right) > 1-N^{-D}. $$
The result then follows as the proof in \cite[Theorem 2.8]{bourgade2018extreme}.
\end{proof}

\subsection{Proof for $ \xi=1 $}\label{s.CaseOne}
Due to the fact that $ \rho(x) $ is the semicircle law in this special case, now the advection equation \eqref{e.AdvectionPDE} is the same as \cite[equation (1.12)]{bourgade2018extreme}, whose characteristics has an explicit formula. This coincidence makes it easy to adapt the previous proofs for $ \xi \neq 1 $ to this case. With a little abuse of notations, define the curve
$$ \S:= \left\{ z=E+\i y: 0 <E< 2 - \varphi^2 N^{-2/3},\ y=\varphi^2/\left( N \kappa(E)^{1/2} \right) \right\}, $$
where $ \kappa(z) := |z-2| $. Under the framework of such notations and the rigidity estimates \eqref{e.RigidityOneSoft} and \eqref{e.RigidityOneHard}, by the arguments in \cite[Section 2]{bourgade2018extreme}, all previous results in Section \ref{s.geometry} still hold for $ \xi=1 $. Then using the same method we can prove Proposition \ref{p.estimate} with few changes, and consequently the proof for Theorem \ref{t.Relaxation} is completed.

\section{Rate of Convergence to Tracy-Widom Law}\label{s.Comparison}
\subsection{Quantitative Green function comparison}
Following the general three-step strategy in the dynamical approach, the derivation of the rate of convergence relies on both the relaxation and the Green function comparison theorem from \cite{GreenFunctionComparison}. In the context of sample covariance matrices, this Lindeberg exchange strategy based on the fourth moments matching condition was first used by Tao and Vu in \cite{TaoVu}. To obtain an explicit convergence rate, we need a quantitative version of the comparison theorem.

For the statement, we consider a fixed $ |E-\lambda_+| < \varphi N^{-2/3} $, a scale $ \rho=\rho(N) \in [N^{-1},N^{-2/3}] $, and a function $ f=f(N):\R \to \R $ satisfying
$$ \|f^{(k)} \|_{L^\infty([E,E+\rho])} \leq C_k \rho^{-k},\ \ \ \|f^{(k)} \|_{L^{\infty}([E^+,E^++1])} = O(1),\ \ \ 0 \leq k \leq 2. $$
where $ E^+ = E+\varphi N^{-2/3} $. We assume that $ f $ is non-decreasing on $ (-\infty,E^+] $,  $ f(x) \equiv 0 $ for $ x<E $ and $ f(x) \equiv 1 $ for $ E+\rho<x\leq E^+ $; and also assume $ f $ is non-increasing on $ [E^+,\infty) $, $ f \equiv 0 $ for $ x>E^+ +1 $. Furthermore, let $ F $ be a fixed smooth non-increasing function such that $ F(x) \equiv 1 $ for $ x \leq 0 $ and $ F(x) \equiv 0 $ for $ x \geq 1 $.

\begin{theorem}[Quantitative Green function comparison]\label{t.GreenCompare}
There exists $ C>0 $ such that the following holds. Let $ \Xv, \Xw $ be data matrices satisfying assumptions \eqref{e.Assumption1} and \eqref{e.Assumption2}, and $ \Hv,\Hw $ be the corresponding sample covariance matrices. Assume that the first three moments of the entries are the same, i.e. for all $ 1 \leq i \leq M $,  $ 1 \leq j \leq N $ and $ 1 \leq k \leq 3 $ we have
$$ \Ev (x_{ij}^k) = \Ew (x_{ij}^k). $$
Assume also that for some parameter $ t=t(N) $ we have
$$ \left| \Ev (\sqrt{M}x_{ij})^4 - \Ew (\sqrt{M}x_{ij})^4 \right| \leq t. $$
With the above notations for the test functions $ f $ and $ F $, we have
$$ \left| (\Ev - \Ew) F \left( \tr f(H) \right) \right| \leq \varphi^C \left( \dfrac{1}{N^{18} \rho^{20}} + \dfrac{t}{N \rho} + \dfrac{1}{(N \rho)^2} + \dfrac{1}{N^2} \right). $$
\end{theorem}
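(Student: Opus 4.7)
I would adapt the Erd\H{o}s-Yau-Yin exchange argument from \cite{GreenFunctionComparison}, carefully tracking every implicit constant in order to extract the stated explicit rate. The plan has three ingredients: a Helffer-Sj\"ostrand reduction to the Green function, an entry-by-entry Lindeberg swap with a fifth-order Taylor expansion, and a local-law based control of the derivatives of the resolvent with respect to a single matrix entry.

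\emph{Step 1 (Helffer-Sj\"ostrand).} First I would rewrite
\begin{equation*}
\tr f(H) = \frac{1}{\pi}\int_{\mathbb{C}} \bar\partial \tilde f(z)\, \tr G(z)\, \mathrm{d}^2 z,
\end{equation*}
where $G(z)=(H-z)^{-1}$ and $\tilde f$ is a high-order almost-analytic extension of $f$ supported in $\{|\im z|\le 1\}$. Truncating at $|\im z|\ge \eta_0$ and using $\|f^{(k)}\|_\infty\lesssim \rho^{-k}$ for $k\leq 2$ gives a deterministic truncation error of size $\varphi^C N^{-18}\rho^{-20}$ after optimizing $\eta_0\sim N^{-1-\delta}$, which accounts for the first term in the bound. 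This reduces the comparison to estimating $(\Ev-\Ew)F(I)$, where $I$ is a smooth functional of $G$ on the contour $|\im z|=\eta_0$.

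\emph{Step 2 (Lindeberg telescope).} Ordering the $MN$ entries of the data matrix, I would interpolate: let $H^{(0)}=\Hv$, $H^{(MN)}=\Hw$, and let $H^{(k)}$ be obtained from $H^{(k-1)}$ by replacing one entry $y=x_{ij}$ of $\Xv$ by the corresponding entry of $\Xw$. For a single swap, let $H(y)$ denote the sample covariance matrix when this entry takes value $y$ (others fixed); because $H=X^*X$ the map $y\mapsto H(y)$ is quadratic, so $\Phi(y):=F(\tr f(H(y)))$ is smooth and Taylor expansion to order four gives
\begin{equation*}
\E\Phi(y)=\Phi(0)+\sum_{j=1}^{4}\frac{\E[y^j]}{j!}\Phi^{(j)}(0)+\E R_5(y).
\end{equation*}
Matching of the first three moments kills the $j=1,2,3$ terms in the telescoped difference; the $j=4$ term differs by at most $(\Ev-\Ew)[y^4]/4!=O(tM^{-2})$; the remainder $R_5$ is controlled on $|y|\lesssim \varphi M^{-1/2}$ by the sub-exponential tail \eqref{e.Assumption2}, with the complementary event contributing $O(N^{-D})$.

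\emph{Step 3 (derivative bounds and main obstacle).} The quantitative estimate then hinges on bounding $|\Phi^{(j)}(0)|$ for $j\leq 5$. By the chain rule, the Helffer-Sj\"ostrand representation, and standard resolvent identities, each $y$-derivative of $\tr G(z)$ pulls out the rank-two factor $\partial_y H = E_{ij}^*X_0+X_0^*E_{ij}$ (with a constant rank-one contribution from the $y^2$ piece of $H(y)$) and introduces extra matrix elements of $G$ localized at the swapped indices. The strong local Marchenko-Pastur law recalled in \sref{LocalLaw}, in its isotropic form, gives $|G_{ab}(z)|\lesssim \varphi$ uniformly for $|\im z|\ge \eta_0$ with overwhelming probability. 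Combined with the $O(\rho)$-wide support of $\bar\partial\tilde f$ in $\re z$, the Ward identity $\sum_c |G_{ac}(z)|^2 = \im G_{aa}(z)/\im z$, and the rigidity bound on the swapped entries of $X$, one obtains per-swap estimates whose sum over the $MN$ entries produces the stated $\varphi^C\bigl[t/(N\rho)+(N\rho)^{-2}+N^{-2}\bigr]$. The main obstacle is precisely the bookkeeping in this step: one must balance the factors $\rho^{-1}$ generated by derivatives of $f$, the factors $\eta^{-1}$ generated by differentiating $G$, and the cutoff $\eta_0$, so as to verify that no term in the Taylor expansion and no term in the chain-rule expansion of $F\circ \tr f\circ H$ creates a power of $\rho$ or $N$ worse than those appearing in the advertised bound.
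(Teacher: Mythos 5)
Your high-level framework (Helffer--Sj\"ostrand reduction, Lindeberg swap over the $MN$ independent entries, local-law control of resolvent entries) is exactly the paper's strategy, so you have the right skeleton. However, there are two substantive errors that would prevent your plan from producing the stated bound.

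\textbf{The $N^{-18}\rho^{-20}$ term is misattributed.} You claim this term arises as a truncation error in the Helffer--Sj\"ostrand step by taking $\eta_0\sim N^{-1-\delta}$. This is not where it comes from: the paper truncates the Helffer--Sj\"ostrand integral at $|\im z| > N^{-1}$ and this truncation contributes only $O(\varphi^C/(N\rho)^2)$, which is already the \emph{third} term in the bound. The $\varphi^C N^{-18}\rho^{-20}$ term actually comes from the Taylor remainder of $F$: the paper expands $F(\Xi^{H_\gamma})$ around $\Xi^Q$ to order $m=20$, uses $\E(\Xi^{H_\gamma}-\Xi^Q)^m = O(\varphi^C/(N\rho)^m)$, and then sums the remainder over the $MN=O(N^2)$ swaps, yielding $O(\varphi^C N^{2-m}\rho^{-m}) = O(\varphi^C N^{-18}\rho^{-20})$. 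The choice $m=20$ is not arbitrary: it is precisely what makes the optimization $\rho\sim N^{-8/9}$ in the final step produce the $N^{-2/9}$ rate.

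\textbf{Your Taylor expansion is both too low order and of the wrong object.} You Taylor expand $\Phi(y)=F(\tr f(H(y)))$ in the scalar entry $y$ to order four, controlling $R_5$ by the sub-exponential tail $|y|\lesssim\varphi M^{-1/2}$. This is the standard non-quantitative exchange argument, but it conflates two expansions that the paper keeps separate and pushes to genuinely high order: (a) the Taylor expansion of $F$ in $\Xi^{H_\gamma}-\Xi^Q$ to order $m=20$, and (b) the resolvent expansion $S = R - RA^vR + \cdots -(RA^v)^{11}R + (RA^v)^{12}S$ with $A^v=V^*Q+Q^*V+V^*V$, pushed to order $12$ so that the remainder $\Omega^v=\tr(RA^v)^{12}S$ is $O(N^{-4})$. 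The moment-matching cancellations and the fourth-moment term contributing $\varphi^C t/(N\rho)$ appear inside the $l=1$ term of the $F$-expansion after one then expands the resolvent. With your single low-order expansion in $y$, the remainder $R_5$ would not be small enough: the quantity $\Xi^{H_\gamma}-\Xi^Q$ is only $O(\varphi^C/(N\rho))$, and with $\rho\sim N^{-8/9}$ this is $O(N^{-1/9}\varphi^C)$, which is not small enough to make a fourth- or fifth-order expansion useful after summing $N^2$ swaps. You would need to re-examine your bookkeeping with the high-order nested expansion in mind.
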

\begin{proof}
We follow the notations in \cite{pillai2014universality} and the reasoning from \cite[Theorem 17.4]{erdos2017dynamical}. Fix a bijective ordering map on the index set of the independent matrix elements,
$ \phi:\{(i,j):1 \leq i \leq M, 1 \leq j \leq N\} \to \{1,\cdots,MN\} $
and define the family of random matrices $ X_\gamma $, $ 0 \leq \gamma \leq MN $
\begin{equation*}
[X_\gamma]_{ij} = \left\{
\begin{aligned}
& [\Xv]_{ij} & \mbox{if} & \ \  \phi(i,j)>\gamma,\\
& [\Xw]_{ij} & \mbox{if} & \ \  \phi(i,j) \leq \gamma.
\end{aligned}
\right.
\end{equation*}
Note that in particular we have $ X_0 = \Xv $ and $ X_{MN} = \Xw $. Denote sample covariance matrices $ H_\gamma $ as
$$ H_\gamma := X_\gamma^* X_\gamma. $$

Let $ \chi $ be a fixed, smooth, symmetric cutoff function such that $ \chi(x)=1 $ if $ |x|<1 $ and $ \chi(x)=0 $ if $ |x|>2 $. By the Helffer-Sj\"{o}strand formula, if $ \lambda_i $'s are the (real) eigenvalues of a matrix $ H $, we have
$$ \sum f(\lambda_i) = \int_{\C} g(z) \tr \dfrac{1}{H-z} \d m(z), $$
where $ \d m $ is the Lebesgue measure on $ \C $, and the function $ g $ is defined as
$$ g(z) := \dfrac{1}{\pi} \left( \i y f''(y) \chi(y) + \i (f(x) + \i y f'(x))\chi'(y) \right),\ \ \ z=x+\i y. $$
Define
$$ \Xi^H := \int_{|y|>N^{-1}} g(z) \tr (H-z)^{-1} \d m(z), $$
and we have the bound (see \cite[Section 5.2]{bourgade2018extreme})
$$ \left|\sum f(\lambda_i) - \Xi^H \right| \leq O \left( \dfrac{\varphi^C}{(N \rho)^2} \right). $$
This shows that it suffices to show
\begin{equation}\label{e.TelescopicSum}
\left| \E F(\Xi^{H_\gamma}) - \E F(\Xi^{H_{\gamma-1}}) \right| \leq\dfrac{ \varphi^C }{N^2} \left( \dfrac{1}{N^{18} \rho^{20}} + \dfrac{t}{N \rho} + \dfrac{1}{(N \rho)^2} + \dfrac{1}{N^2} \right).
\end{equation}
For an arbitrarily fixed $ \gamma $ corresponding to $ (i,j) $, we can write
$$ X_{\gamma-1}=Q+V,\ \ \ V:=\Xv_{ij}E^{(ij)},\ \ \ \ X_\gamma = Q+W,\ \ \ W:= \Xw_{ij}E^{(ij)}.  $$
where $ Q $ coincides with $ X_{\gamma-1} $ and $ X_\gamma $ except on the $ (i,j) $ position (where it is 0). We define the Green functions
$$ R:= (Q^*Q-z)^{-1},\ \ \ S:=(H_{\gamma-1}-z)^{-1}. $$
By Taylor expansion, for some fixed order $ m $, we have
\begin{multline}\label{e.Taylor}
\E F(\Xi^{H_{\gamma}}) - \E F(\Xi^{H_{\gamma-1}}) = \sum_{l=1}^{m-1} \E \dfrac{F^{(l)}(\Xi^Q)}{l !}\left( (\Xi^{H_\gamma} - \Xi^Q)^l - (\Xi^{H_{\gamma-1}} - \Xi^Q)^l \right)\\
 + O \left( \|F^{(m)} \|_\infty \right) \left( \E \left( (\Xi^{H_\gamma} - \Xi^Q)^m + (\Xi^{H_{\gamma-1}} - \Xi^Q)^m \right) \right).
\end{multline}

First we estimate the $ m $-th order error term. By the first order resolvent expansion we have
\begin{align*}
|\Xi^{H_{\gamma}} - \Xi^Q| &\leq \int_{|y|>N^{-1},|x|<\lambda_+ +2} |g(z)|\left| \tr R(z)(V^*Q+Q^*V+V^*V)S(z) \right| \d m(z) \\
&\leq \varphi^C N \int_{|y|>N^{-1},|x|<\lambda_+ +2} |g(z)| \|S(z) \|_\infty \|R(z) \|_\infty \d m(z)
\end{align*}
with overwhelming probability, where we use the fact that there are only $ O(N) $ nonzero entries with size $ O(N^{-1}) $ in the matrix $ V^*Q+Q^*V+V^*V $. By the strong local Marchenko-Pastur law (\cite[Theorem 3.1]{pillai2014universality}), for any $ D>0 $ we have
$$ \P \left( \max_{j}|S_{jj}(z) - m_{\MP}(z)| + \max_{j \neq k}|S_{jk}(z)| \leq \varphi^C \left( \dfrac{1}{Ny}+\sqrt{\dfrac{\im m_{\MP}(y)}{Ny}} \right) \right) > 1- N^{-D}. $$
Same bound for $ \|R(z) \|_\infty $ also holds (see \cite[Lemma 5.4]{pillai2014universality}). This shows that
$$ \E  (\Xi^{H_\gamma} - \Xi^Q)^m = O \left( \varphi^C / (N^m \rho^m) \right),\ \ \ \E  (\Xi^{H_{\gamma-1}} - \Xi^Q)^m = O \left( \varphi^C / (N^m \rho^m) \right). $$
Therefore the $ m $-th order term in \eqref{e.Taylor} can be bounded by $ \varphi^C N^{-2}(N^{-m+2} \rho^{-m}) $.

Next we consider the first order term in the Taylor expansion. By the resolvent expansion, we have
$$ S=R-R \Av R+(R \Av)^2R-(R \Av)^3R+ \cdots - (R \Av)^{11}R + (R \Av)^{12}S, $$
where
$$ \Av = V^*Q + Q^*V + V^*V. $$
Denote
$$ \Rv^{(n)} := (-1)^{n} \tr (R \Av)^n R,\ \ \ \ \Omv := \tr (R \Av)^{12} S. $$
Then we have
$$ \E F'(\Xi^Q) \left( \Xi^{H_{\gamma-1}} - \Xi^{H_{\gamma}} \right) =  \E F'(\Xi^Q) \int g(z) \left( \sum_{n=1}^{11} \left( \Rv^{(n)} - \Rw^{(n)} \right) +(\Omv - \Omw) \right) \d m(z). $$
Since the first three moments of the two matrices are identical, we know that the case $ n=1 $ gives null contribution.

For $ n=2 $, note that the entries of the matrix $ A $ satisfy the following relation
\begin{equation*}
A_{ab}=\left\{
\begin{aligned}
& x_{ij} x_{ib} & \quad & \mbox{if} \  a=j,b \neq j,\\
& x_{ij} x_{ia} & \quad & \mbox{if} \  a \neq j, b=j,\\
& 0 & \quad &\mbox{otherwise}.
\end{aligned}
\right.
\end{equation*}
This shows that
$$ \E (\Rv^{(2)} - \Rw^{(2)}) \leq N \left( \dfrac{t}{N^2} \right) \left(\max_{i \neq j} |R_{ij}|\right)^2 \left(\max_i |R_{ii}|\right), $$
where we used that in the expansion
\begin{align*}
\tr (R \Av)^2R & = \sum_{k} \sum_{a_1,b_1,a_2,b_2} R_{k a_1}\Av_{a_1 b_1}R_{b_1 a_2}\Av_{a_2 b_2}R_{b_2 k}\\
& = \sum_{k} \sum_{(a_1,b_1,a_2,b_2) \neq (j,j,j,j)} R_{k a_1}\Av_{a_1 b_1}R_{b_1 a_2}\Av_{a_2 b_2}R_{b_2 k}  + \sum_k  R_{kj}\Av_{jj}R_{jj}\Av_{jj}R_{jk}
\end{align*}
due to the moment matching condition, the terms that make nontrivial contribution are only in the second summation, which is
$$ \sum_k R_{kj}R_{jj}R_{jk}(\Xv_{ij})^4. $$
Here we also use the fact that the contribution for the terms with $ k $ equal $ i $ or $ j $ is combinatorially negligible. By the local law, we conclude
$$ \E F'(\Xi^Q) \int g(z) \left( \Rv^{(2)} - \Rw^{(2)} \right) \d m(z) = O \left( \dfrac{\varphi^C t}{N} \right) \int \dfrac{|g(z)|}{(Ny)^2} \d m(z) = O \left( \dfrac{\varphi^C}{N^2} \dfrac{t}{N\rho}  \right). $$
For the terms $ n=3,\dots,11 $, as explained in \cite[Lemma 5.4]{pillai2014universality}, their contributions are of smaller order.
Similarly, for the term $ (\Omv - \Omw) $, as shown in \cite[Lemma 5.4]{pillai2014universality} we have $ \Omv = O(N^{-4}) $. Therefore we have
$$ \E F'(\Xi^Q) \int g(z) \left( (\Omv - \Omw) \right) \d m(z) = O \left( \dfrac{\varphi^C}{N^2} \dfrac{1}{N^2 \rho} \right)=O \left( \dfrac{\varphi^C}{N^2} \right) \left( \dfrac{1}{(N \rho)^2} + \dfrac{1}{N^2} \right). $$

Moreover, as explained in \cite[Theorem 17.4]{erdos2017dynamical}, the contributions of higher order terms in Taylor expansion are of smaller order. Combining the estimates and taking $ m=20 $ (we will see the reason in the next section) gives us \eqref{e.TelescopicSum}. Finally, a telescopic summation yields the desired result.
\end{proof}

\subsection{Proof of Theorem \ref{t.Rate}}
Let $ s \in \R $. If $ |s|>\varphi $, due to the rigidity we know that for any $ D>0 $ and large enough $ N $, we have $ \P\left( N^{2/3}(\lambda_N - \lambda_+) \leq s \right) = \P(\TW  \leq s) + O(N^{-D}) $. So in the following discussion we assume $ |s| \leq \varphi $.

Denoting a non-decreasing function $ f_1 $ such that $ f_1(x)=1 $ for $ x>\lambda_+ + sN^{-2/3} $ and $ f_1(x)=0 $ for $ x<\lambda_+ + sN^{-2/3} - \rho $. We also define $ f_2(x) := f_1(x-\rho) $. Then we have
\begin{equation}\label{e.ProbEst}
\E_H F \left( \sum_{i=1}^N f_1(\lambda_i)  \right) \leq \P_H \left( \lambda_N < \lambda_+ + s N^{-2/3} \right) \leq \E_H F \left( \sum_{i=1}^N f_2(\lambda_i)  \right).
\end{equation}

Moreover, as discussed in \cite{erdos2011universality,pillai2014universality}, we can find an $ M \times N $ matrix $ \tilde{X}_0 $ such that the Gaussian divisible ensemble $ \tilde{X}_t := e^{-t/2}\tilde{X}_0 + (1-e^{-t})^{1/2}X_G $, where $ X_G $ is a matrix whose entries are independent Gaussian random variables with mean 0 and variance 1, satisfies the following: for $ 1 \leq k \leq 3 $,
$$ \E (\sqrt{M}X_{ij})^k = \E [\tilde{X}_t]_{ij}^k,\ \ \ \ |\E (\sqrt{M}X_{ij})^4 - \E [\tilde{X}_t]_{ij}^4| \lesssim t. $$
By the quantitative Green function comparison theorem \ref{t.GreenCompare}, we obtain the following bound
\begin{multline*}
\E_{\tilde{X}_t} F \left( \sum_{i=1}^N f_1(\lambda_i) \right) - \varphi^C \left( \dfrac{1}{N^{18} \rho^{20}} + \dfrac{t}{N \rho} + \dfrac{1}{(N \rho)^2} + \dfrac{1}{N^2} \right)
\leq \P_H \left( \lambda_N < \lambda_+ + s N^{-2/3} \right) \\
\leq \E_{\tilde{X}_t} F \left( \sum_{i=1}^N f_2(\lambda_i) \right) + \varphi^C \left( \dfrac{1}{N^{18} \rho^{20}} + \dfrac{t}{N \rho} + \dfrac{1}{(N \rho)^2} + \dfrac{1}{N^2} \right).
\end{multline*}
Using \eqref{e.ProbEst} for $ \tilde{X}_t $, the estimate becomes
\begin{multline*}
\P_{\tilde{X}_t} \left( \lambda_N < \lambda_+ + s N^{-2/3} -\rho \right)  - \varphi^C \left( \dfrac{1}{N^{18} \rho^{20}} + \dfrac{t}{N \rho} + \dfrac{1}{(N \rho)^2} + \dfrac{1}{N^2} \right)\\
\leq \P_H \left( \lambda_N < \lambda_+ + s N^{-2/3} \right) \leq \\
\P_{\tilde{X}_t} \left( \lambda_N < \lambda_+ + s N^{-2/3} +\rho \right)  + \varphi^C \left( \dfrac{1}{N^{18} \rho^{20}} + \dfrac{t}{N \rho} + \dfrac{1}{(N \rho)^2} + \dfrac{1}{N^2} \right).
\end{multline*}
After combined with the edge relaxation Theorem \ref{t.Relaxation}, the estimate now gives us
\begin{multline*}
\P_{\WS} \left( N^{2/3}(\lambda_N - \lambda_+) < s -N^{2/3}\rho - \dfrac{N^\ep}{N^{1/3}t} \right) - \varphi^C \left( \dfrac{1}{N^{18} \rho^{20}} + \dfrac{t}{N \rho} + \dfrac{1}{(N \rho)^2} + \dfrac{1}{N^2} \right)\\
 \leq \P_H \left( N^{2/3}(\lambda_N - \lambda_+) < s \right) \leq \\
\P_{\WS} \left( N^{2/3}(\lambda_N - \lambda_+) < s +N^{2/3}\rho + \dfrac{N^\ep}{N^{1/3}t} \right) + \varphi^C \left( \dfrac{1}{N^{18} \rho^{20}} + \dfrac{t}{N \rho} + \dfrac{1}{(N \rho)^2} + \dfrac{1}{N^2} \right).
\end{multline*}
Moreover, as shown in \cite{el2006rate,ma2012accuracy}, we know
$$ \P_{\WS} \left( N^{2/3}(\lambda_N - \lambda_+) <s \right)  = \P(\TW <s)+ O(N^{-2/3}). $$
By using this Wishart result and the boundedness of the density for $ \TW $, we obtain
\begin{multline}\label{e.RateNullCase}
\P_H \left(N^{2/3}(\lambda_N - \lambda_+) <s \right) - \P \left( \TW <s \right)\\
 = O \left( N^\ep \right) \left( N^{2/3}\rho + \dfrac{1}{N^{1/3}t} + \dfrac{1}{N^{18} \rho^{20}} + \dfrac{t}{N \rho} + \dfrac{1}{(N \rho)^2} + \dfrac{1}{N^{2/3}} \right).
\end{multline}
The optimal bound $ N^{-2/9 + \ep} $ is obtained for $ t=N^{-1/9} $ and $ \rho = N^{-8/9} $. This completes the whole proof for Theorem \ref{t.Rate}.

\section{Generalization to General Population Matrices}\label{s.GeneralPopulation}
In this section, we proceed to generalize our previous results for sample covariance matrices of type $ X^*X $ (which corresponds to the identity population) and aim to derive the rate of convergence to the Tracy-Widom distribution for the (rescaled) largest eigenvalue of separable sample covariance matrices with general population. Throughout this section, we will follow the notations and the setup in the the work by Lee and Schnelli \cite{lee2016tracy}.

Let $ X=(x_{ij}) $ be defined as in \eqref{e.Assumption1} and \eqref{e.Assumption2}. For some deterministic $ M \times M $ matrix $ T $, the sample covariance matrices associated with data matrix $ X $ and population matrix $ \Sigma:=T^*T $ is defined as $ \mathcal{Q}:=(TX)(TX)^* $. Note that the $ M \times M $ matrix $ \mathcal{Q} $ and the matrix
$$ Q:=X^*\Sigma X $$
share the same non-trivial eigenvalues. Since we are studying the largest eigenvalue of the sample covariance matrix, it is more convenient to work with the matrix $ Q $ (called the separable sample covariance matrix) for some technical reasons. We denote the eigenvalues of $ Q $ in increasing order by $ \mu_1 \leq \cdots \leq \mu_N $.

As mentioned previously, for the null case (i.e. the population matrix is identity), it is well known that the empirical eigenvalue distribution of a sample covariance matrix converges weakly in probability to the Marchenko-Pastur law. Under the general setting, however, this results need to be modified and the limiting measure (called the deformed Marchenko-Pastur law) will depends on the spectrum of the population matrix. Let $ \sigma_1 \leq \cdots \leq \sigma_M $ be the eigenvalues of the population matrix $ \Sigma $, we denote by $ \hat{\rho}=\hat{\rho}(M) $ the empirical eigenvalue distribution of $ \Sigma $, which is defined as
$$ \hat{\rho}:=\dfrac{1}{M}\sum_{j=1}^M \delta_{\sigma_j}. $$
The deformed Marchenko-Pastur law $ \hat{\rho}_{\fc} $ is defined in the following way. The Stieltjes transform $ \hat{m}_{\fc} $ of the probability measure is given by the unique solution of the equation
$$  \hat{m}_{\fc}(z)=\dfrac{1}{-z+\xi^{-1}\int \frac{1}{t  \hat{m}_{\fc}(z) + 1}\d \hat{\rho}(t)},\ \ \ \im \hat{m}_{\fc}(z) \geq 0,\ \ \ z \in \C^+. $$
It has been discussed in \cite{knowles2017anisotropic} that $  \hat{m}_{\fc} $ is associated to a continuous probability density $  \hat{\rho}_{\fc} $ with compact support in $ [0,\infty) $. Moreover, the density $\hat{\rho}_{\fc}  $ can be obtained from $ \hat{m}_{\fc} $ via the Stieltjes inversion formula
$$ \hat{\rho}_{\fc}(E) = \lim_{\eta \downarrow 0}\dfrac{1}{\pi}\im \hat{m}_{\fc}(E+\i \eta). $$
The typical location of the largest eigenvalue, which is the rightmost endpoint of the support of the density $ \hat{\rho}_{\fc} $ is determined in the following way. Recall that $ \xi:=N/M $, we define $ \xi_+ $ as the largest solution of the equation
$$ \int \left( \dfrac{t \xi_+}{1-t\xi_+} \right)^2 \d \hat{\rho}_{\fc}(t) = \xi. $$
We remark that $ \xi_+ $ is unique and $ \xi_+ \in [0,\sigma_M^{-1}] $. We then introduce the typical location for the largest eigenvalue $ E_+ $ by
\begin{equation}\label{e.RightEndpoint}
E_+ := \dfrac{1}{\xi_+}\left( 1+\xi^{-1}\int \dfrac{t \xi_+}{1-t\xi_+} \d \hat{\rho}_{\fc}(t) \right).
\end{equation}

Now we state our assumptions on the population matrix $ \Sigma $ that are needed to prove the explicit rate of convergence. For general random matrices $ X $ we require $ \Sigma $ to be diagonal, and we will show later this diagonal condition can be removed if $ X $ is Gaussian. We further need the following assumption for the spectrum of the population matrix $ \Sigma $. Throughout this section, we assume the following:
\begin{equation}\label{e.AssumptionPopulation}
\liminf_M \sigma_1 >0,\ \ \ \limsup_M \sigma_M <\infty,\ \ \ \mbox{and}\ \ \ \limsup_M \sigma_M \xi_+ <1.
\end{equation}
\begin{remark}
The assumption \eqref{e.AssumptionPopulation} is the same as \cite[Assumption 2.2]{lee2016tracy}. It is first used in \cite{bao2015universality,knowles2017anisotropic} to prove the local deformed Marchenko-Pastur law. In particular, the last inequality ensures that the density $ \hat{\rho}_{\fc} $ exhibits a square-root behavior near the right edge of its support, which is crucial to derive the local law.
\end{remark}

It is natural to note that with a general population matrix, the distribution of the largest eigenvalue should not behave exactly like the null case. Besides the typical location of the largest eigenvalue is changed, the normalization constant of the fluctuation is also different. Therefore, we introduce the following normalization constant $ \gamma_0 $ given by
\begin{equation}\label{e.ScalingConstant}
\dfrac{1}{\gamma_0^3} = \dfrac{1}{\xi}\int \left( \dfrac{t}{1-t\xi_+} \right)\d \hat{\rho}(t) + \dfrac{1}{\xi_+^3}.
\end{equation}
Moreover, we remark that the Tracy-Widom limit for the general case is rescaled and it is not the same as the previous one we used for the null case. However they are just different by a simple scaling so that we do not emphasize this difference and still use the notation $ \TW $ to denote this distribution. Under this framework, our main result given in Corollary \ref{t.RateGeneralPopulation}.

Unlike the proof for the null case in the previous sections, we will not strictly follow the three-step strategy in the dynamical approach. Instead, we use the comparison theorem for the Green function flow, which is a method based on continuous interpolation, linearization and renormalization developed in \cite{lee2016tracy}.

\subsection{Local deformed Marchenko-Pastur law}
For completeness, we will briefly introduce the local deformed Marchenko-Pastur law. Though we will not give a detailed proof, we emphasize that the local law is an indispensable part to prove the Green function comparison Proposition \ref{p.GreenFunctionFlow}, which further leads to the edge universality.

For small nonnegative $ c,\epsilon \geq 0 $ and sufficiently large $ E_+<C<\infty $, we consider the domain
$$ \mathcal{D}(c,\epsilon) :=\left\{ z=E+\i\eta \in \C^+ : E_+-c \leq E \leq C,  N^{-1+\epsilon} \leq \eta \leq 1 \right\}. $$
We also denote $ \kappa=\kappa(E):=|E-E_+| $. Then we have the following estimates for the density and the Stieltjes transform of the deformed Marchenko-Pastur law.
\begin{lemma}[Theorem 3.1 in \cite{bao2015universality}]\label{l.EstiamtesDeformedMP}
Under the assumption \eqref{e.AssumptionPopulation}, there exists a constant $ c>0 $ such that
$$ \hat{\rho}_\fc(E) \sim \sqrt{E_+ - E}\ \ \ \ E \in [E_+-2c,E_+]. $$
Moreover, the Stieltjes transform $ \hat{m}_{\fc} $ satisfies the following: for $ z \in \mathcal{D}(c,0) $, we have
\begin{equation*}
|\hat{m}_{\fc}(z)| \sim 1,\ \ \ \ \im \hat{m}_{\fc}(z) \sim \left\{
\begin{aligned}
& \frac{\eta}{\sqrt{\kappa+\eta}} & \mbox{if} &\ E \geq E_++\eta,\\
& \sqrt{\kappa+\eta}, & \mbox{if} &\ E \in [E_+-c,E_++\eta).
\end{aligned}
\right.
.
\end{equation*}
\end{lemma}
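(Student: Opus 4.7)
The natural approach is to work with the inverse Stieltjes transform: the defining equation for $\hat{m}_\fc$ can be rewritten in the form $z = f(\hat{m}_\fc(z))$, where
\[
f(m) := -\frac{1}{m} + \frac{1}{\xi}\int \frac{d\hat{\rho}(t)}{1+tm}.
\]
This is a holomorphic function of $m$ on a neighborhood of the negative real axis, as long as $-1/m$ stays outside $[\sigma_1,\sigma_M]$. The first step is to identify the right endpoint $E_+$ of the support with a critical point of $f$: one checks that the limit $m_+ := \lim_{\eta\downarrow 0}\hat{m}_\fc(E_++i\eta)$ is real and negative, set $m_+ = -\xi_+$, and $f'(m_+) = 0$. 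This amounts to the defining relation for $\xi_+$. The assumption $\limsup \sigma_M \xi_+ < 1$ then guarantees that $1+tm_+ = 1-t\xi_+$ stays uniformly bounded away from $0$ on $\supp\hat{\rho}$, so $f$ is analytic at $m_+$.

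The second step is a second-order Taylor expansion of $f$ around $m_+$. A direct computation gives
\[
f''(m_+) = -\frac{2}{m_+^3} + \frac{2}{\xi}\int \frac{t^2\,d\hat{\rho}(t)}{(1-t\xi_+)^3},
\]
and under \eqref{e.AssumptionPopulation} one verifies that $f''(m_+)$ is strictly positive and of order one (this is precisely where $\liminf \sigma_1>0$, $\limsup\sigma_M<\infty$ and $\limsup \sigma_M\xi_+<1$ are used, to keep all three factors bounded). Therefore
\[
z - E_+ \;=\; \tfrac{1}{2}f''(m_+)\,(m - m_+)^2 + O\bigl((m-m_+)^3\bigr),
\]
which by local inversion yields $\hat{m}_\fc(z) - m_+ \sim (z-E_+)^{1/2}$ with the principal branch chosen so that $\im \hat{m}_\fc \geq 0$ for $\im z > 0$. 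Taking imaginary parts as $\eta\downarrow 0$ gives the density asymptotic $\hat{\rho}_\fc(E) = \pi^{-1}\im\hat{m}_\fc(E+i0) \sim \sqrt{E_+-E}$ on $[E_+-2c,E_+]$, provided $c>0$ is small enough for the Taylor remainder to be controlled.

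The third step reads off the estimates for $z \in \mathcal{D}(c,0)$ from the same expansion. Since $m_+$ is real, negative, and of order one, $\hat{m}_\fc$ is continuous up to the real axis in a complex neighborhood of $E_+$ and $|\hat{m}_\fc(z)|\sim 1$. Writing $z - E_+ = -\kappa + i\eta$ (for $E\leq E_+$) or $\kappa + i\eta$ (for $E\geq E_+$) and using $\hat{m}_\fc(z) - m_+ \sim \sqrt{z-E_+}$, one obtains by separating real and imaginary parts of the complex square root: for $E \geq E_+ + \eta$, $|z-E_+|\sim\kappa$ and $\im\sqrt{z-E_+} \sim \eta/\sqrt{\kappa+\eta}$; for $E \in [E_+-c,E_++\eta)$, $|z-E_+| \sim \kappa+\eta$ with argument bounded away from $0$, so $\im\sqrt{z-E_+}\sim\sqrt{\kappa+\eta}$. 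This is the stated dichotomy.

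The main obstacle is the non-degeneracy of the expansion, i.e.\ showing $f''(m_+)\sim 1$ uniformly in $M$. Without the gap condition $\limsup\sigma_M\xi_+<1$ the quantity $1-t\xi_+$ could vanish on $\supp\hat{\rho}$, which would drive $f''(m_+)$ to infinity (and, more importantly, signal the emergence of a cusp or a splitting of the support, destroying the square-root behavior). Once this non-degeneracy is established using \eqref{e.AssumptionPopulation}, the rest of the argument is the standard edge analysis via implicit Taylor expansion, as carried out in detail in \cite{bao2015universality,knowles2017anisotropic}; we simply invoke their result here.
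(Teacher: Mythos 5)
The paper does not prove this lemma; it is quoted verbatim from \cite[Theorem 3.1]{bao2015universality}, so there is no internal proof to compare against. Your sketch follows the standard singular-expansion argument used in \cite{bao2015universality,knowles2017anisotropic} to establish square-root behavior at a regular soft edge, and as a roadmap it is structurally correct: identify $E_+$ with a critical value of the inverse $f$ of the Stieltjes transform, use \eqref{e.AssumptionPopulation} to show $f''(m_+)\sim 1$ uniformly, Taylor-expand and invert to get $\hat{m}_\fc(z)-m_+\sim\sqrt{z-E_+}$, then read the density and $\im\hat{m}_\fc$ estimates off the complex square root, which you carry out correctly for both regimes $E\geq E_++\eta$ and $E\in[E_+-c,E_++\eta)$.

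One detail to flag: with the $f$ you wrote down, inherited from the paper's displayed self-consistent equation (integrand $\frac{1}{1+tm}$), the critical-point condition $f'(-\xi_+)=0$ reduces to $\int\frac{t\xi_+^2}{(1-t\xi_+)^2}\,d\hat\rho(t)=\xi$, which does \emph{not} match the stated defining relation $\int\big(\frac{t\xi_+}{1-t\xi_+}\big)^2 d\hat\rho(t)=\xi$. The standard form of the equation carries an extra factor $t$ in the integrand, $f(m)=-\frac{1}{m}+\xi^{-1}\int\frac{t\,d\hat\rho(t)}{1+tm}$, under which $f'(-\xi_+)=0$ does reproduce the stated $\xi_+$-equation and your $f''$ formula correspondingly has $t^3$, not $t^2$, in its numerator. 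This is almost certainly a typographical slip in the paper that you propagated; your argument goes through unchanged once $f$ is corrected. As you acknowledge at the end, making the sketch rigorous (uniqueness of the solution branch, stability of the expansion up to the real line, uniformity of constants over $\mathcal{D}(c,0)$) requires the detailed analysis of the cited references, which you rightly invoke rather than reprove.
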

The Green function and the Stieltjes transform are defined in the usual way:
$$ G_Q(z) := (Q-z)^{-1},\ \ \ m_Q(z) := \dfrac{1}{N}\tr G_Q(z). $$
Then we have the following local law for the separable sample covariance matrix $ Q $.
\begin{lemma}[Theorem 3.2 and Theorem 3.3 in \cite{bao2015universality}]\label{l.LocalDeformedMPLaw}
Under the assumption \eqref{e.AssumptionPopulation}, for any sufficiently small $ \epsilon>0 $, and for any (large) $ D>0 $, there exists $ N_0(D)>0 $ such that for any $ N \geq N_0(D) $ we have the following estimate uniformly in $ z \in \mathcal{D}(c,\epsilon) $:
$$ \P \left( |m_Q(z) - \hat{m}_{\fc}(z)| \leq \dfrac{N^\epsilon}{N\eta} \right) > 1-N^{-D}, $$
and
$$ \P \left( \max_{i,j}\left| (G_Q)_{ij}(z) - \delta_{ij}\hat{m}_{\fc}(z) \right| \leq N^\epsilon \left( \sqrt{\dfrac{\im \hat{m}_{\fc}(z)}{N\eta}} + \dfrac{1}{N\eta} \right) \right) > 1-N^{-D}. $$
\end{lemma}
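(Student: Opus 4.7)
The plan is to follow the standard three-layer architecture for proving local laws for (anisotropic / deformed) sample covariance matrices, as developed in \cite{bao2015universality, knowles2017anisotropic}. The central object is an approximate self-consistent equation for the diagonal Green function entries of $Q$, derived via a Schur complement expansion of a suitable linearization, which in its averaged form reproduces the defining equation of $\hat{m}_{\fc}$. The three layers are: (i) derivation of the approximate self-consistent equation; (ii) large-deviation control of the fluctuation errors; (iii) edge stability, a bootstrap in $\eta$, and a fluctuation-averaging upgrade.

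First I would introduce the linearization
\begin{equation*}
\mathcal{H}(z) := \begin{pmatrix} -\Sigma^{-1} & X \\ X^{*} & -z I_{N} \end{pmatrix},
\end{equation*}
whose $N\times N$ lower-right block is related to $G_{Q}(z)$ by an explicit Schur formula. Since $\Sigma$ is diagonal, the entries of $\mathcal{H}$ depend on $X$ linearly with independent scalar coefficients, and applying Schur's complement formula yields, for diagonal entries of the corresponding enlarged Green function,
\begin{equation*}
G_{ii}(z)^{-1} = -z - \frac{1}{N}\sum_{a=1}^{M} \frac{\sigma_{a}}{1 + \sigma_{a}\, m_{Q}^{(i)}(z)} + \mathcal{E}_{i}(z),
\end{equation*}
together with analogous identities for off-diagonal entries. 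The error $\mathcal{E}_{i}$ decomposes into the three standard fluctuation terms: a centred quadratic form $\mathbf{y}_{a}^{*} G^{(a)} \mathbf{y}_{a} - \tr G^{(a)}/N$ in a row of $X$, an off-diagonal bilinear form in two independent rows, and a trace-difference term $m_{Q}^{(i)} - m_{Q}$. Under assumption \eqref{e.Assumption2}, the Hanson--Wright type large-deviation estimate controls each of these with overwhelming probability by $N^{\epsilon}\bigl(\sqrt{\im m_{Q}/(N\eta)} + 1/(N\eta)\bigr)$, which is precisely the scale of the target entrywise bound; the off-diagonal bilinear version gives the claimed control on $G_{ij}$ for $i \neq j$.

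Averaging the diagonal Schur equations in $i$ produces an approximate equation for $m_{Q}$ that differs from the equation defining $\hat{m}_{\fc}$ by an error of the same order. The sharp estimate is then extracted via the edge stability of $\hat{m}_{\fc}$: Lemma~\ref{l.EstiamtesDeformedMP} combined with $\limsup \sigma_{M} \xi_{+} < 1$ from \eqref{e.AssumptionPopulation} controls the inverse of the Jacobian of the self-consistent map at $m = \hat{m}_{\fc}(z)$ by $(\sqrt{\kappa+\eta})^{-1}$ near the right edge, forcing any approximate solution lying close to $\hat{m}_{\fc}$ to satisfy the claimed bound. The bootstrap is initialized at $\eta=1$ using the trivial estimate $\|G_{Q}(z)\| \leq 1/\eta$ and is descended to $\eta = N^{-1+\epsilon}$ using Lipschitz continuity of both sides in $z$ on an $N^{-C}$-lattice together with a union bound over polynomially many grid points. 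Finally, the sharper $1/(N\eta)$ scale for the averaged estimate, as opposed to the entrywise $\sqrt{\im \hat{m}_{\fc}/(N\eta)}$, comes from a fluctuation averaging step that exploits cancellation between the $\mathcal{E}_{i}$'s when summed in $i$; this is the by-now-standard fluctuation averaging lemma of Erd\H{o}s--Knowles--Yau--Yin.

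The principal obstacle is the edge stability analysis. The linearization of the self-consistent equation becomes near-singular at $z = E_{+}$, and one must quantify the inverse Jacobian precisely, using both the square-root density $\hat{\rho}_{\fc}(E) \sim \sqrt{E_{+} - E}$ from Lemma~\ref{l.EstiamtesDeformedMP} and the spectral separation assumption $\limsup \sigma_{M} \xi_{+} < 1$ in \eqref{e.AssumptionPopulation}; without the latter, either the density loses its square-root edge behavior or a BBP-type outlier emerges, and stability breaks. This is precisely the step that requires the full strength of \eqref{e.AssumptionPopulation}, whereas the remaining steps of the argument are largely insensitive to the fine structure of the population spectrum.
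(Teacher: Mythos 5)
Note that the paper does not prove this lemma; it is cited verbatim as Theorems 3.2 and 3.3 of Bao--Pan--Zhou \cite{bao2015universality}, so there is no ``paper's own proof'' to compare against. Your sketch is a correct reproduction of the standard three-layer local-law architecture used in that literature, though the $(M+N)\times(M+N)$ block linearization you write down with $-\Sigma^{-1}$ in the corner is actually closer in spirit to the Knowles--Yin anisotropic framework \cite{knowles2017anisotropic} than to the original argument in \cite{bao2015universality}, which derives the self-consistent equation via a Schur complement performed directly on the $N\times N$ resolvent of $X^*\Sigma X$ row by row; both routes arrive at the same fixed-point equation for $\hat m_{\fc}$. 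Your identification of the edge stability step --- quantifying the inverse Jacobian of the self-consistent map using the square-root density together with $\limsup_M \sigma_M \xi_+ < 1$ --- as the place where \eqref{e.AssumptionPopulation} is genuinely needed is accurate, and your explanation of why the averaged bound gains a full $1/(N\eta)$ over the entrywise $\sqrt{\im \hat m_{\fc}/(N\eta)}$ via fluctuation averaging is also correct. One small imprecision: the large-deviation bound on the quadratic and bilinear forms in a removed row naturally produces $\sqrt{\im m_Q^{(i)}/(N\eta)}$ (depending on the \emph{random} Stieltjes transform), not directly on $\hat m_{\fc}$; turning the former into the latter is exactly what the self-improving bootstrap in $\eta$ accomplishes, and you gesture at this but conflate the two quantities in the displayed error scale. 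This does not break the argument but should be stated cleanly.
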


It is clear to see that the estimates for the deformed Marchenko-Pastur law (Lemma \ref{l.EstiamtesDeformedMP}) and the local law (Lemma \ref{l.LocalDeformedMPLaw}) are greatly similar as the corresponding results for the null case (see e.g. \cite[Theorem 3.1]{pillai2014universality}). Heuristically, this implies the Tracy-Widom limit in the edge universality for the non-null case.

\subsection{Interpolation and Green function comparison}
In classical theory of random matrix universality, the tool needed to prove the edge universality is the Green function comparison theorem. The usual approach is to compare two ensembles with some moments matching conditions, and then use the construction of Gaussian divisible ensembles together with estimates of the local relaxation flow to remove the moments matching requirement. In this section, however, we do not follow this traditional step. Instead, we compare the Green function of a general ensemble with its corresponding null sample covariance matrix. This argument was first introduced in \cite{lee2015edge} to handle the deformed Wigner matrices, and used in \cite{lee2016tracy} to identify the Tracy-Widom limit for general separable sample covariance matrices. The basic idea is to introduce a time evolution that deforms the population matrices continuously to the identity and offset the change of the Green function by a renormalization of the matrix.

Recall the scaling constant $ \gamma_0 $ defined in \eqref{e.ScalingConstant}. We consider the following two rescaled matrices
$$ \tilde{\Sigma} := \gamma_0 \Sigma,\ \ \ \tilde{Q} := X^* \tilde{\Sigma}X. $$
We also denote the eigenvalues of $ \tilde{Q} $ by $ \tilde{\mu}_1 \leq \cdots \leq \tilde{\mu}_N $, and let $ L_+ := \gamma_0 E_+ $. We remark that in the literature about sample covariance matrices with general population (e.g. \cite{bao2015universality,el2007tracy,lee2016tracy}), the scaling of the Tracy-Widom distribution is chosen in the way such that it is the limit for the distribution of the (rescaled) largest eigenvalue of the matrix
$$ W:=\sqrt{\xi}(1+\sqrt{\xi})^{-4/3}X^*X. $$
Specifically, we order the eigenvalues of the matrix $ W $ by $ \lambda_1 \leq \cdots \leq \lambda_N $, and let $ M_+ $ denote the rightmost endpoint of the rescaled Marchenko-Pastur law for $ W $.

\begin{remark}
It has been shown in \cite[equation (1.9)]{bao2015universality} that
$$ \gamma_0 = \sqrt{\xi}(1+\sqrt{\xi})^{-4/3} + o(1). $$
This can be regarded as a good motivation for considering the scaling constant $ \gamma_0 $.
\end{remark}

For the diagonal population matrix $ \Sigma = \mbox{diag}(\sigma_j) $, we introduce the following time evolution $ t \mapsto (\sigma_j(t)) $ that deforms $ \Sigma $ to the identity matrix $ \Id $ and the Green function flow by
\begin{equation}\label{e.GreenFunctionFlow}
\dfrac{1}{\sigma_j(t)}=e^{-t}\dfrac{1}{\sigma_j(0)}+(1-e^{-t}),\ \ \ \tilde{Q}(t)=\gamma_0X^*\Sigma(t)X,\ \ \ m_{\tilde{Q}(t)}(z) := \dfrac{1}{N}\tr (\tilde{Q}(t)-z)^{-1}.
\end{equation}
Based on the local law Lemma \ref{l.EstiamtesDeformedMP} and Lemma \ref{l.LocalDeformedMPLaw}, and a delicate analysis for the time derivative of the Green function for $ \tilde{Q}(t) $, the Green function comparison theorem (see Proposition \ref{p.GreenFunctionFlow}) is proved in \cite{lee2016tracy}. We note that though the original estimate in \cite{lee2016tracy} is not explicit, a careful examination of the proof will reveal that the result is actually quantitative.

\begin{proposition}[Theorem 4.1 in \cite{lee2016tracy}]\label{p.GreenFunctionFlow}
Let $ \ep>0 $ and set $ \eta = N^{-2/3-\ep} $. Let $ E_1,E_2 \in \R $ satisfy $ E_1 < E_2 $ and $ |E_1|,|E_2| \leq N^{-2/3+\ep} $. Let $ F: \R \to \R $ be a smooth function satisfying
$$ \max_x |F^{(l)}(x)|(|x|+1)^{-C} \leq C,\ \ \ \ l=1,2,3,4. $$
Then for any (small) $ \de>0 $ and for sufficiently large $ N $ we have
\begin{multline*}
\left| \E F \left( N \int_{E_1}^{E_2} \im m_{\tilde{Q}}(x+L_++\i \eta) \d x \right) - \E F \left( N \int_{E_1}^{E_2} \im m_{W}(x+M_++\i \eta) \d x \right) \right|\\
\leq N^{-\frac{1}{3}+2\ep+\de}.
\end{multline*}
\end{proposition}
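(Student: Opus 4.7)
The plan is to prove the comparison by running the continuous interpolation flow $\tilde{Q}(t) = \gamma_0 X^* \Sigma(t) X$ defined in \eqref{e.GreenFunctionFlow} from $t = 0$ (the deformed model) to $t = \infty$ (where $\Sigma(t) \to \Id$ and $\tilde{Q}(t)$ converges to $\gamma_0 X^* X$, which agrees with $W$ up to an $o(1)$ rescaling by the remark). Writing $Y_t := N \int_{E_1}^{E_2} \im m_{\tilde{Q}(t)}(x + L_+(t) + \i \eta)\,\d x$, where $L_+(t)$ denotes the time-dependent rightmost edge of the limiting density associated with $\Sigma(t)$, the goal is to show that $\E F(Y_t)$ is essentially constant in $t$ up to an error $N^{-1/3 + 2\ep + \de}$, and then handle the small mismatch between $\gamma_0 X^*X$ and $W$ via the $o(1)$ in the remark together with the rigidity of the largest eigenvalue of $W$.

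The first step is a \emph{linearization}: since $(\tilde{Q}(t) - z)^{-1}$ depends on $\Sigma(t)$ in a nonlinear (inverse) fashion, I would embed $\tilde{Q}(t)$ into a $(M+N) \times (M+N)$ block resolvent of the form
\begin{equation*}
\mathcal{G}(z,t) := \begin{pmatrix} -\gamma_0^{-1}\Sigma(t)^{-1} & X \\ X^* & -z\Id \end{pmatrix}^{-1},
\end{equation*}
whose lower-right $N \times N$ block is $z^{-1}$ times a Schur complement reproducing $(\tilde{Q}(t)-z)^{-1}$. This is convenient because $\partial_t \Sigma(t)^{-1}$ is \emph{linear}: from \eqref{e.GreenFunctionFlow} one has $\partial_t \Sigma(t)^{-1} = -\Sigma(t)^{-1} + \Id$. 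Differentiating the resolvent identity along the flow therefore yields $\partial_t m_{\tilde{Q}(t)}(z)$ as a bilinear expression in the diagonal entries of the upper-left block of $\mathcal{G}$ and the function $(\Sigma(t)^{-1})_{jj} - 1$.

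The second step is to expand $\frac{d}{dt}\E F(Y_t) = \E F'(Y_t) \cdot \frac{dY_t}{dt}$ and substitute the $t$-derivative computed from the linearization. The derivative of $Y_t$ splits into two pieces: a \emph{spectral} piece coming from $\partial_t m_{\tilde{Q}(t)}$ and a \emph{edge-drift} piece coming from $\partial_t L_+(t)$. The scaling constant $\gamma_0$ defined in \eqref{e.ScalingConstant} and, more importantly, the self-consistent equation for $\hat{m}_{\fc}$ force an exact cancellation between the leading-order parts of these two pieces when the local deformed Marchenko--Pastur law (Lemma \ref{l.LocalDeformedMPLaw}) is inserted. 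What survives is a fluctuation term controlled by the off-diagonal entries of $\mathcal{G}$, whose size is $N^\ep (\sqrt{\im \hat{m}_{\fc}/(N\eta)} + (N\eta)^{-1})$; at the edge one has $\im \hat{m}_{\fc}(L_+ + \i\eta) \sim \sqrt{\eta}$ by Lemma \ref{l.EstiamtesDeformedMP}, so with $\eta = N^{-2/3-\ep}$ each entry contributes a factor of order $N^{-1/3+O(\ep)}$ after integrating $x$ over the window of size $N^{-2/3+\ep}$. Integrating the resulting bound over $t \in [0, C \log N]$ (the flow equilibrates exponentially fast) produces the claimed bound $N^{-1/3 + 2\ep + \de}$.

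The main obstacle, and what requires the most care, is establishing the cancellation between the spectral drift and the edge drift with enough precision: one must expand the self-consistent equation for $\hat{m}_{\fc}$ at the edge to sufficient order, use it to identify both $\partial_t L_+(t)$ and the leading part of $\partial_t m_{\tilde{Q}(t)}$ in terms of the same moment $\int (t\xi_+/(1-t\xi_+))^k \d\hat\rho(t)$, and then exploit the defining relation \eqref{e.ScalingConstant} of $\gamma_0$ to match them. The rest of the argument is a standard application of the anisotropic local law on the domain $\mathcal{D}(c,\ep)$ together with a routine Gronwall-type integration in $t$, and a final comparison of the $t = \infty$ endpoint with $W$ using that $\gamma_0 - \sqrt{\xi}(1+\sqrt{\xi})^{-4/3} = o(1)$ and the rigidity of $\lambda_N(W)$ at scale $N^{-2/3}$.
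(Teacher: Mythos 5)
Note first that the paper gives no proof of Proposition~\ref{p.GreenFunctionFlow}: it is quoted verbatim as Theorem 4.1 of Lee--Schnelli \cite{lee2016tracy}, and the only justification offered is the remark that ``a careful examination of the proof will reveal that the result is actually quantitative.'' Your sketch is therefore best read as a reconstruction of the Lee--Schnelli argument, and at the level of strategy --- interpolating flow on $\Sigma(t)^{-1}$, linearization via an $(M+N)\times(M+N)$ block resolvent, the anisotropic local law as input, and an integrated bound on $\tfrac{d}{dt}\E F(Y_t)$ --- it is a fair description of the same mechanism the paper implicitly invokes.

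The genuine gap is that the step responsible for the entire quantitative bound, namely the claimed ``exact cancellation between the spectral drift $\partial_t m_{\tilde Q(t)}$ and the edge drift $\partial_t L_+(t)\,\partial_z m_{\tilde Q(t)}$ forced by the self-consistent equation and the definition of $\gamma_0$,'' is asserted rather than demonstrated. Whether the deterministic parts cancel, at what order, and which residual terms survive to produce the $N^{-1/3+2\ep+\de}$ error all depend on a careful expansion of the self-consistent equation for $\hat m^{(t)}_\fc$ at the moving edge $L_+(t)$ that your sketch does not perform; indeed, the fixed constant $\gamma_0$ only normalizes the square-root coefficient of the limiting density at $t=0$, so the deterministic part of $Y_t$ does drift for $t>0$, and it is not automatic that inserting Lemma~\ref{l.LocalDeformedMPLaw} reduces the drift to a single factor of $N^{\ep}\bigl((N\eta)^{-1}+\sqrt{\im\hat m_\fc/(N\eta)}\bigr)$. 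Moreover, your $t\to\infty$ endpoint is $\gamma_0 X^*X$, not $W$; closing the argument requires the asymptotic $\gamma_0=\sqrt\xi(1+\sqrt\xi)^{-4/3}+o(1)$ of the Remark quantified to order $N^{-2/3-\ep}$, since both counting functions are resolved at scale $N^{-2/3}$ and a mismatch of the rescaling constants shifts the window by a comparable amount. None of this bookkeeping is supplied, and it is exactly this bookkeeping that the paper itself outsources to \cite{lee2016tracy}.
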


\subsection{Quantitative edge universality}
In this section we can finally prove Corollary \ref{t.RateGeneralPopulation}. The proof is based on our previous rate of convergence for the null case (Theorem \ref{t.Rate}) and the estimate on the comparison theorem for the Green function flow (Proposition \ref{p.GreenFunctionFlow}).
\begin{proof}[Proof of Corollary \ref{t.RateGeneralPopulation}]
We first remark that we are not supposed to use the rate of convergence for the null case (Theorem \ref{t.Rate}) and the triangle inequality in a naive way to derive the convergence rate for the general case. This is because Theorem \ref{t.Rate} is obtain by choosing the optimal parameters in the estimate \eqref{e.RateNullCase}, and the scale parameter $ \rho $ is also related to the scale in the Green function comparison (Proposition \ref{p.GreenFunctionFlow}). To illustrate this link more clearly, we first briefly review how Green function comparison is used to obtain the edge universality.

We introduce a smooth cutoff function $ K:\R \to \R $ satisfying
\begin{equation*}
K(x)=
\left\{
\begin{aligned}
& 1 &\ \mbox{if}\ & x \leq 1/9,\\
& 0 &\ \mbox{if}\ & x \geq 2/9.
\end{aligned}
\right.
\end{equation*}
and we also define the Poisson kernel $ \theta_\eta $, for $ \eta>0 $
$$ \theta_\eta(x) := \dfrac{\eta}{\pi(x^2+\eta^2)}. $$
Let $ E_*:=L_+ +\varphi^C N^{-2/3} $, and denote $ \chi_E := 1_{[E,E_*]} $. For $ \ep>0 $, let $ l:= \tfrac{1}{2}N^{-2/3-\ep} $ and $ \eta:=N^{-2/3-9\ep} $. Then for any (large) $ D>0 $, it is proved in \cite{lee2016tracy,pillai2014universality} that for large enough $ N $ we have
\begin{equation}
\E K\left( \tr (\chi_{E-l} * \theta_\eta(\tilde{Q}))  \right) \leq \P\left( \tilde{\mu}_N \leq E \right) \leq \E K\left( \tr (\chi_{E+l} * \theta_\eta(\tilde{Q}))  \right) + N^{-D}.
\end{equation}
Here the parameter $ l $ plays the same role as the $ \rho $ in \eqref{e.RateNullCase}, and therefore we have $ N^{-\ep}=N^{2/3}\rho $ and $ \eta=N^{-2/3}N^{6}\rho^{9} $. By the Green function comparison Proposition \ref{p.GreenFunctionFlow} and the \cite[Theorem 2.4]{lee2016tracy}, we have
\begin{multline*}
\P \left( N^{2/3}(\lambda_N - M_+) \leq s \right) - N^{-\frac{1}{3}+18\ep+\de} \leq \P \left( N^{2/3}(\tilde{\mu}_N - L_+) \leq s \right)\\
\leq \P \left( N^{2/3}(\lambda_N - M_+) \leq s \right) + N^{-\frac{1}{3}+18\ep+\de}.
\end{multline*}
This gives us
\begin{equation}\label{e.GeneralQuantEdgeUniversality}
\dk \left( \gamma_0 N^{2/3}(\mu_N - E_+),N^{2/3}(\lambda_N-M_+) \right) \leq N^{\de}N^{-1/3}N^{-12}\rho^{-18}.
\end{equation}
Combined with Theorem \eqref{e.RateNullCase}, by triangle inequality we finally obtain
\begin{align*}
&\quad \dk \left( \gamma_0 N^{2/3}(\mu_N - E_+),\TW \right)\\
& \leq \dk \left( \gamma_0 N^{2/3}(\mu_N - E_+),N^{2/3}(\lambda_N-M_+) \right) + \dk \left( N^{2/3}(\lambda_N-M_+),\TW \right)\\
& \leq N^{\de}\left( N^{-1/3}N^{-12}\rho^{-18} + N^{2/3}\rho + \dfrac{1}{N^{1/3}t} + \dfrac{1}{N^{18} \rho^{20}} + \dfrac{t}{N \rho} + \dfrac{1}{(N \rho)^2} + \dfrac{1}{N^{2/3}} \right).
\end{align*}
The optimal result is obtained now by choosing $ \rho=N^{-13/19} $ and $ t=N^{-6/19} $, which gives us
$$  \dk \left( \gamma_0 N^{2/3}(\mu_N - E_+),\TW \right) \leq N^{-\frac{1}{57}+\de}. $$
This completes the proof.
\end{proof}

Based on the Corollary \ref{t.RateGeneralPopulation} for diagonal population matrices $ \Sigma $ and general random matrices $ X $, we can easily obtain the following result for the case in which we can have a general population if the random matrix $ X $ is restricted to be Gaussian.

\begin{corollary}\label{c.RateGaussian}
Let $ Q := X^* \Sigma X $ be an $ N \times N $ separable sample covariance matrix, where $ X $ is an $ M \times N $ real random matrix with independent Gaussian entries satisfying \eqref{e.Assumption1}, and $ \Sigma $ is a real positive-definite deterministic $ M \times M $ matrix satisfying \eqref{e.AssumptionPopulation}. For any $ \ep>0 $ and large enough $ N $, we have
\begin{equation*}
\dk \left( \gamma_0 N^{2/3} (\mu_N - E_+),\TW \right) \leq N^{-\frac{1}{57} + \ep}
\end{equation*}
\end{corollary}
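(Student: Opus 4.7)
The plan is to reduce the general population case to the diagonal population case already handled by Corollary~\ref{t.RateGeneralPopulation}, exploiting the orthogonal invariance of the Gaussian distribution. Since $\Sigma$ is real symmetric positive-definite, we may write its spectral decomposition $\Sigma = U^* D U$ where $U$ is an $M \times M$ orthogonal matrix and $D = \mathrm{diag}(\sigma_1, \dots, \sigma_M)$ is the diagonal matrix of eigenvalues of $\Sigma$. Then
\begin{equation*}
Q = X^* \Sigma X = X^* U^* D U X = (UX)^* D (UX).
\end{equation*}
First I would argue that $UX$ has the same joint distribution as $X$. Each column of $X$ is a vector in $\R^M$ whose components are i.i.d.~real Gaussians with mean $0$ and variance $M^{-1}$, and the $N$ columns are mutually independent. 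Multiplying by $U$ on the left acts column-wise, and the Gaussian distribution with covariance $M^{-1} I_M$ is invariant under orthogonal transformations; independence across columns is preserved since left-multiplication by $U$ acts within each column. Hence $UX \stackrel{d}{=} X$, which yields
\begin{equation*}
Q \stackrel{d}{=} X^* D X.
\end{equation*}

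Next I would verify that all the quantities appearing in the statement of Corollary~\ref{t.RateGeneralPopulation} depend on $\Sigma$ only through its empirical eigenvalue distribution $\hat{\rho}$. Indeed, $\hat{\rho}$, and hence $\hat{m}_{\fc}$, $\hat{\rho}_{\fc}$, $\xi_+$, $E_+$ (as defined in \eqref{e.RightEndpoint}), and $\gamma_0$ (as defined in \eqref{e.ScalingConstant}) all depend only on $\{\sigma_j\}_{j=1}^M$, which is the same for $\Sigma$ and $D$. Similarly, the condition \eqref{e.AssumptionPopulation} is phrased entirely in terms of the eigenvalues of the population matrix, so it holds for $D$ if and only if it holds for $\Sigma$. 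The Gaussian assumption on $X$ trivially implies the sub-exponential decay \eqref{e.Assumption2}, so the hypotheses of Corollary~\ref{t.RateGeneralPopulation} are met by the pair $(X, D)$.

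Applying Corollary~\ref{t.RateGeneralPopulation} to the diagonal-population sample covariance matrix $X^* D X$ then yields, for any $\ep > 0$ and sufficiently large $N$,
\begin{equation*}
\dk \left( \gamma_0 N^{2/3}(\tilde\mu_N - E_+), \TW \right) \leq N^{-\tfrac{1}{57} + \ep},
\end{equation*}
where $\tilde\mu_N$ is the largest eigenvalue of $X^* D X$. Since $Q \stackrel{d}{=} X^* D X$, we have $\mu_N \stackrel{d}{=} \tilde\mu_N$, and the stated bound for $\mu_N$ follows immediately.

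There is essentially no substantive obstacle here: the entire argument is a one-line distributional reduction enabled by Gaussian orthogonal invariance, with the remainder reducing to bookkeeping that the scaling constants depend only on spectral data of $\Sigma$. The only point one should state with a modicum of care is the joint distributional equality $UX \stackrel{d}{=} X$, which one can check either by the covariance computation above or by noting that the law of a matrix with i.i.d.~standard Gaussian entries is invariant under left multiplication by any orthogonal matrix (after accounting for the $M^{-1/2}$ normalization).
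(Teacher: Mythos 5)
Your proof is correct and follows essentially the same route as the paper: diagonalize $\Sigma = U^* D U$, use orthogonal invariance of the Gaussian ensemble to conclude $UX \stackrel{d}{=} X$, and apply Corollary~\ref{t.RateGeneralPopulation} to $X^* D X$. You are somewhat more careful than the paper in spelling out that $E_+$, $\gamma_0$, and assumption~\eqref{e.AssumptionPopulation} depend on $\Sigma$ only through its spectrum, but this is the same argument.
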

\begin{proof}
Under these assumptions, we know that the population matrix $ \Sigma $ is diagonizable, i.e. there exists an $ M \times M $ real diagonal matrix $ D $ and an $ N \times N $ orthogonal matrix $ U $ such that $ \Sigma = U^*DU $. Since $ X $ is a matrix whose entries are independent Gaussian random variable, we know that $ UX $ is also a real random matrix with Gaussian entries satisfying the assumption \eqref{e.Assumption1}. Therefore, by applying our Corollary \ref{t.RateGeneralPopulation} to the matrix $ X^*\Sigma X = (UX)^*D(UX) $, we will get the desired result.
\end{proof}

\small
\bibliographystyle{abbrv}
\bibliography{CovarianceTW}

\end{document}